\documentclass{article}

\usepackage[
	a4paper,
	margin=1in
]{geometry}
\usepackage{setspace}
\usepackage{sectsty} 
\usepackage{titlesec}
\usepackage{appendix}

\usepackage{amsmath, amssymb, amsthm}
\usepackage{mathtools}

\usepackage[
	setpagesize=false,
	colorlinks=true,
	linkcolor=BrickRed,
        citecolor=OliveGreen,
        urlcolor=black,
	pdfencoding=auto,
	psdextra,
]{hyperref}
\usepackage{cleveref}

\usepackage{etoolbox} 
\usepackage[shortlabels]{enumitem}
\usepackage{xparse} 

\usepackage[dvipsnames]{xcolor}
\usepackage{graphicx, color}
\usepackage{epsfig}
\usepackage{tikz}
\usetikzlibrary{calc,decorations.pathmorphing,decorations.text}
\usepackage{subcaption}
\usepackage{refcount}

\usepackage{todonotes}
\usepackage[
    deletedmarkup=sout,
    commentmarkup=uwave,
    authormarkuptext=name
]{changes}

\usepackage{comment}

\onehalfspacing

\setlist[enumerate,1]{label=(\arabic*), ref=(\arabic*)}
\setlist[enumerate,3]{label=(\roman*), ref=(\roman*)}

\allsectionsfont{\boldmath} 

\titlelabel{\thetitle.\quad}

\theoremstyle{plain}
\newtheorem{theorem}{Theorem}[section]
\newtheorem{lemma}[theorem]{Lemma}

\newtheorem{question}[theorem]{Question}

\newtheorem{claim}[theorem]{Claim}
\newtheorem*{claim*}{Claim}

\makeatletter
\newenvironment{claimproof}[1][Proof]{\par
	\pushQED{\qed}%
	
	\normalfont \topsep6\p@\@plus6\p@\relax
	\trivlist
	\item[\hskip\labelsep
	\textit{#1}\@addpunct{.}~]\ignorespaces
}{%
	\popQED\endtrivlist\@endpefalse
}
\makeatother

\newlist{Cases}{enumerate}{3}
\setlist[Cases]{parsep=0pt plus 1pt}
\setlist[Cases,1]{wide=0pt, listparindent=\parindent,
    label = \textbf{Case~\arabic*:}, ref = \arabic*}
\setlist[Cases,2]{wide=\parindent, listparindent=\parindent,
    label = \textbf{Case~\arabic{Casesi}-\arabic{Casesii}:}}

\crefname{Casesi}{case}{cases}

\newcounter{case}
\AtBeginEnvironment{proof}{\setcounter{case}{0}}

\crefname{case}{case}{cases}

\theoremstyle{definition}
\newtheorem{definition}[theorem]{Definition}


\newcommand{\PP}{\mathbb{P}}
\newcommand{\EE}{\mathbb{E}}

\newcommand{\Var}{\mathrm{Var}}
\newcommand{\Cov}{\mathrm{Cov}}
\newcommand{\disc}{\mathrm{disc}}

\usepackage[
    backend=biber,
    sorting=nyt,
    maxbibnames=99
]{biblatex}
\addbibresource{bibfile.bib}


\title{On high discrepancy $1$-factorizations of complete graphs}
\author{Jiangdong Ai\thanks{School of Mathematical Sciences and LPMC, Nankai University, Tianjin 300071, P.R.
China. Email: {\tt jd@nankai.edu.cn., hefankang@mail.nankai.edu.cn.}} \and Fankang He\footnotemark[1] \and Seonghyuk Im \thanks{Department of Mathematical Sciences, KAIST, and Extremal Combinatorics and Probability Group (ECOPRO), Institute for Basic Science (IBS), Daejeon, South Korea. Email:{\tt $\{$seonghyuk, hyunwoo.lee$\}$@kaist.ac.kr}} \and Hyunwoo Lee\footnotemark[2]}
\date{\today}

\begin{document}

\maketitle


\begin{abstract}
    We proved that for every sufficiently large $n$, the complete graph $K_{2n}$ with an arbitrary edge signing $\sigma: E(K_{2n}) \to \{-1, +1\}$ admits a high discrepancy $1$-factor decomposition. That is, there exists a universal constant $c > 0$ such that every edge-signed $K_{2n}$ has a perfect matching decomposition $\{\psi_1, \ldots, \psi_{2n-1}\}$, where for each perfect matching $\psi_i$, the discrepancy $\lvert \frac{1}{n} \sum_{e\in E(\psi_i)} \sigma(e) \rvert$ is at least $c$. 
\end{abstract}


\section{Introduction}\label{sec:intro}

The study of finding a structure in a given system that deviates from the typical structures of the system has appeared in many contexts in various areas of mathematics over the past century. The systematic investigation of such problems is now referred to as discrepancy theory, which was initiated from the seminal work of Weyl~\cite{Weyl}. Subsequently, discrepancy theory has been applied to a variety of problems in number theory, discrete geometry, ergodic theory, and combinatorics. For a comprehensive overview of the development and applications of discrepancy theory, we suggest referring to ~\cite{Alexander-Beck-Chen,Beck-Chen}.

The concept of discrepancy in edge-colored graphs was implicitly studied in various problems of graph theory, such as many problems in Ramsey theory~\cite{Ramsey-survey,Size-ramsey-early} or several conjectures on common graphs~\cite{Erdos-common,Thomason-common} and it was explicitly mentioned in several early works~\cite{Erdos-Furedi-Loebl-Sos, Erdos-Spencer}.
To provide an intuitive framework for defining the notion of \emph{discrepancy} in graphs, we confine our consideration to a simplified edge-coloring paradigm. Specifically, we assign the colors in the edge-coloring of the graphs to be $-1$ or $+1$. 

\begin{definition}
    Let $G$ be a graph with an edge coloring $\sigma: E(G) \to \{-1, +1\}$. The  \emph{(normalized-) discrepancy} of $G$ is defined as
    $$
        \disc(G) := \Bigl\lvert \frac{1}{|E(G)|}\sum_{e\in E(G)} \sigma(e) \Bigr\rvert.
    $$
\end{definition}

Recently, the study of high discrepancy spanning structures in edge-colored graphs has attracted significant attention, leading to extensive investigations. For instance, researchers have explored Dirac-type conditions for the existence of Hamilton cycles and perfect matchings with high discrepancy in graphs~\cite{Discrepancy-Hamcycle}, directed graphs~\cite{Oriented-discrepancy}, and hypergraphs~\cite{Gishboliner-Glock-Sgueglia,Han-Lang-Marciano-PavezSigne, Lu-Ma-Xie}. Similarly, the discrepancy version of Hajnal-Szemer\'{e}di theorem~\cite{Hajnal-Szemredi-discrepancy} and the minimum degree threshold for $H$-factors with high discrepancy~\cite{H-factor-discrepancy} were considered.

In this paper, we consider a high discrepancy edge decomposition problem. As we mentioned in the previous paragraph, the existence of large spanning structures in edge-colored graphs has received a lot of attention so far. Hence, it is natural to raise the question of whether it is possible to find many edge-disjoint structures simultaneously, where each of them has a high discrepancy. In other words, can we decompose a given edge-colored graph into edge decomposition into subgraphs such that each subgraph in the decomposition graph has a high discrepancy? Our main result is the existence of a high discrepancy version of the $1$-factor decompositions in complete graphs.

\begin{theorem}\label{thm:high_discrepancy}
    There exists a universal constant $c>0$ such that the following holds for every sufficiently large $n$.
    For every edge coloring $\sigma:E(K_{2n}) \to \{-1, +1\}$ of the complete graph on $2n$ vertices has a $1$-factor decomposition $\{\psi_1, \ldots, \psi_{2n-1}\}$ of $K_{2n}$ such that for each $i \in [2n-1]$, we have $\disc(\psi_i) > c$.
\end{theorem}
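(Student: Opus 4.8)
The plan is to reduce the problem to a handful of structured cases depending on how "unbalanced" the coloring $\sigma$ is, and in each case exhibit a $1$-factorization each of whose factors inherits a constant-size bias. Write $m = \binom{2n}{2}$ and let $d = \sum_{e} \sigma(e)$ be the total signed weight. The first dichotomy is on whether $|d|$ is large (say $|d| \ge \varepsilon n^2$ for a small constant $\varepsilon$) or small. If $|d|$ is large, the "average" $1$-factor already has discrepancy bounded below, and the task becomes one of \emph{equitably} distributing the signed weight across the $2n-1$ factors: take any fixed $1$-factorization and repeatedly swap pairs of edges between two factors along alternating $4$-cycles to push every factor's signed weight toward the average $d/(2n-1)$, which is $\Theta(n)$; a discrete intermediate-value / local-move argument shows one can make every factor have signed weight $\ge c n$ simultaneously. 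If $|d|$ is small, then roughly half the edges are $+1$ and half are $-1$, and we look instead at local imbalance: by an averaging argument there must be a vertex $v$ whose incident edges are noticeably unbalanced, or more robustly, one can find a large structured piece — e.g. a large clique, a complete bipartite subgraph, or a perfect matching's worth of monochromatic "columns" — on which the bias concentrates.

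The technical heart I would build is a \textbf{robust one-factor extraction lemma}: given an edge-signed $K_{2n}$ that is not globally nearly balanced on some linear-sized vertex subset, one can pull out a single perfect matching $\psi$ with $\disc(\psi) > c$ while leaving the remaining graph $K_{2n} - \psi$ still "flexible enough" that it is itself $1$-factorizable with the residual being either $K_{2n}$ minus a near-$1$-factorization we control, or reduces to the balanced case. Since removing one perfect matching from $K_{2n}$ leaves a $(2n-2)$-regular graph rather than a complete graph, I would set up the induction on the class of near-complete regular graphs (i.e. $K_{2n}$ minus a union of $\le \varepsilon n$ perfect matchings), which is closed under the operation "delete a perfect matching," and for which Baranyai-type / König-type $1$-factorization results still apply. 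The book-keeping invariant carried through the induction is: the union of the already-removed matchings has signed weight close to proportional, so that the leftover graph's total bias stays controlled and one never gets stuck.

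Concretely, the steps in order: (1) fix the two regimes via a threshold $\varepsilon$; (2) in the high-$|d|$ regime, prove the equidistribution lemma by alternating-$4$-cycle exchanges (the parity obstruction — signed weights of $1$-factors have fixed parity $n \bmod 2$ — is harmless since we only need a lower bound); (3) in the low-$|d|$ regime, use a weighted pigeonhole to locate a linear-sized vertex set $U$ on which $\sigma$ restricted to $K[U]$ or to a bipartite cut has $\Omega(|U|^2)$ bias, then extract matchings greedily, each time routing a constant fraction of its edges through the biased region; (4) handle the residual near-complete regular graph by the same analysis, tracking the invariant that removed matchings carry near-proportional weight; (5) assemble, choosing $c$ as the minimum of the constants produced. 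The main obstacle I anticipate is step (3)–(4): ensuring that greedily extracting biased matchings does not degrade the structure of the biased region too quickly — i.e. proving that after removing $o(n)$ matchings the region $U$ still supports enough monochromatic edges to feed the next matching. I expect this to require either a defect/ absorption argument (reserve a small "absorbing" sub-structure in $U$ up front) or a potential-function argument showing the bias of $K[U]$ decays slower than the number of matchings removed, so that a constant-fraction-of-$n$ worth of biased matchings can be harvested and the rest dumped into the easy balanced regime.
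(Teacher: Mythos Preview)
Your proposal has a genuine gap in the low-$|d|$ regime, and the strategy there does not match what actually makes the argument go through.

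First, the claim that ``by an averaging argument there must be a vertex whose incident edges are noticeably unbalanced, or a linear-sized subset on which the bias concentrates'' is not true in general. Consider a uniformly random signing: with high probability $|d|=O(n)$, yet every vertex has $d^+(v)=(1+o(1))n$ and every linear-sized set $U$ satisfies $|S_\sigma(K[U])|=o(|U|^2)$. There is simply no biased region to harvest from, so steps (3)--(4) never get off the ground. The correct local object to look for in the near-balanced case is not a biased \emph{set} but a biased \emph{configuration}: the paper counts ``switcher'' copies of $C_4$ --- $4$-cycles whose two matchings have different signed sums --- and shows via a dependent-random-choice argument that there are $\Omega(n^4)$ of them whenever $\disc(K_{2n})$ is small. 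Switchers exist in abundance precisely in the regime where biased regions do not.

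Second, even granting a biased region, your greedy extraction plus induction on ``$K_{2n}$ minus $\le \varepsilon n$ matchings'' cannot produce all $2n-1$ factors. You yourself note that only a constant fraction of $n$ biased matchings can be extracted before the region degrades; the residual is then a $(2n-1-k)$-regular graph that is neither complete nor in the high-$|d|$ regime (your invariant keeps the bias proportional, hence still small), so neither of your two cases applies to it. The paper avoids this recursion entirely: it first decomposes $K_{2n}$ into $n-1$ copies of $C_4$-factors (plus one $K_4$-factor) using the resolution of the Oberwolfach problem, then applies a single uniformly random vertex permutation and Talagrand's inequality to guarantee that \emph{every} $C_4$-factor simultaneously contains $\Omega(n)$ switchers. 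Each $C_4$-factor is then split into its two perfect matchings, and the switchers are used to swap edges between them until both have discrepancy $\Omega(1)$. This handles all factors at once with no degradation and no induction on a shrinking host graph.

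Your high-$|d|$ case is morally fine but also under-specified: the ``local $4$-cycle swaps push every factor toward the average'' step needs a simultaneous control over $2n-1$ quantities, which is not immediate from a discrete intermediate-value argument. The paper dispatches this case directly with a random-permutation concentration lemma (\Cref{thm:balanced_coloring}), which shows that a uniformly random relabelling of any fixed $1$-factorization already gives every factor discrepancy within $\varepsilon$ of $\disc(K_{2n})$.
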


    
    In the course of proving \Cref{thm:high_discrepancy}, we derive the theorem for a `low-discrepancy decomposition', which we deem to be of independent significance.
    For a graph $G$ with edge coloring $\sigma:E(G) \to \{-1, +1\}$, the \emph{signed discrepancy} of $G$ defined as 
    $$\disc^{\pm}(G) :=\frac{1}{|E(G)|} \sum_{e \in E(G)} \sigma(e).$$
    We note that $\disc(G)$ is same as $|\disc^{\pm}(G)|$.
\begin{theorem}\label{thm:balanced_coloring}
    Let $\Delta > 0$ be a positive integer and let $\varepsilon > 0$ be a positive real number. Then there exists $n_0 = n_0(\Delta, \varepsilon) > 0$ such that the following holds for all $n \geq n_0$.

    Assume $K_n$ can be edge-decomposed into $\{F_1, \dots, F_m\}$, where $\Delta(F_i) \leq \Delta$ and $e(F_i) \geq \varepsilon n$ for each $i\in [m]$.
    Then for every edge coloring $\sigma:E(K_{n}) \to \{-1, +1\}$, there exists an edge decomposition of $K_{n}$ into $\{F_1, \dots, F_m\}$ such that for each $i\in [m]$, we have $\disc^{\pm}(K_n) - \varepsilon \leq \disc^{\pm}(F_i) \leq \disc^{\pm}(K_n) + \varepsilon$.
\end{theorem}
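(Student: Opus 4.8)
The plan is to produce the required decomposition simply by applying one uniformly random relabeling of the vertices to the decomposition $\{F_1, \dots, F_m\}$ we are handed. Let $\pi$ be a uniformly random permutation of $V(K_n)$, regarded as an automorphism of $K_n$, and set $G_i := \pi(F_i)$ for each $i\in[m]$. Since $K_n$ is complete, $\pi$ permutes $E(K_n)$ bijectively, so $\{G_1,\dots,G_m\}$ is again an edge-decomposition of $K_n$, and $G_i \cong F_i$ via the restriction of $\pi$ (in particular $\Delta(G_i)\le\Delta$ and $e(G_i)=e(F_i)\ge \varepsilon n$). Thus the only thing to verify is that, with positive probability, every $G_i$ has signed discrepancy within $\varepsilon$ of $\mu := \disc^{\pm}(K_n)$.

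First I would compute the expectation. For a fixed $i$, let $X_i$ be the number of edges $e\in E(F_i)$ with $\sigma(\pi(e))=+1$, so that $\disc^{\pm}(G_i)=\frac{2X_i}{e(F_i)}-1$. For each fixed $e=xy\in E(F_i)$, the pair $\{\pi(x),\pi(y)\}$ is a uniformly random element of $E(K_n)$, so $\PP[\sigma(\pi(e))=+1]=|\sigma^{-1}(+1)|/\binom n2$. Summing over $e\in E(F_i)$ and unwinding the definitions gives $\EE[\disc^{\pm}(G_i)]=\mu$ exactly, for every $i$.

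Next comes the concentration step, which is where the hypotheses $\Delta(F_i)\le\Delta$ and $e(F_i)\ge\varepsilon n$ enter. View $X_i$ as a function of $\pi$. Precomposing $\pi$ with a transposition of two vertices changes $X_i$ by at most $2\Delta$, since only edges of $F_i$ incident to those two vertices are affected and each such vertex has $F_i$-degree at most $\Delta$. By the standard bounded-differences (Azuma–McDiarmid) inequality for functions of a random permutation — revealing the values $\pi(v_1),\dots,\pi(v_n)$ one at a time along a Doob martingale whose increments are bounded by $2\Delta$ — we obtain $\PP\bigl[\,|X_i-\EE X_i|>t\,\bigr]\le 2\exp\bigl(-t^2/(8\Delta^2 n)\bigr)$. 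Taking $t=\tfrac{\varepsilon}{2}e(F_i)\ge \tfrac{\varepsilon^2}{2}n$ bounds this by $2\exp\bigl(-\varepsilon^4 n/(32\Delta^2)\bigr)$, and on the complementary event $|\disc^{\pm}(G_i)-\mu|\le\varepsilon$.

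Finally, since $e(F_i)\ge\varepsilon n$ for all $i$ and $\sum_i e(F_i)=\binom n2$, there are at most $m\le n/(2\varepsilon)$ parts, so a union bound shows the probability that some $G_i$ is bad is at most $\tfrac{n}{2\varepsilon}\cdot 2\exp\bigl(-\varepsilon^4 n/(32\Delta^2)\bigr)$, which is $o(1)$ as $n\to\infty$ for fixed $\Delta$ and $\varepsilon$; hence for all $n\ge n_0(\Delta,\varepsilon)$ a good $\pi$ exists, and $\{G_1,\dots,G_m\}$ is as desired. I do not expect a serious obstacle: the only point requiring care is confirming that the martingale increments are genuinely $O(\Delta)$, so that each part's failure probability is exponentially small in $n$; once that is in place, the linear lower bound $e(F_i)\ge\varepsilon n$ leaves a comfortable margin against the $O(n)$-term union bound (indeed $e(F_i)=\omega(\sqrt{n\log n})$ would already suffice).
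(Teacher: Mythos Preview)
Your proof is correct and follows the same overall strategy as the paper: apply a uniformly random vertex permutation $\pi$ to the given decomposition, show that each $\pi(F_i)$ has signed discrepancy within $\varepsilon$ of $\disc^{\pm}(K_n)$ with probability $1-e^{-\Omega(n)}$, and take a union bound over the $O(n)$ parts.

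The one genuine difference is the concentration tool. You use the Azuma--McDiarmid bounded-differences inequality for random permutations, observing that a single transposition changes $X_i$ by at most $2\Delta$; this directly controls deviation from the \emph{mean}. The paper instead applies Talagrand's inequality for random permutations (\Cref{lem:Talagrand}), which controls deviation from the \emph{median}, and so they first run a Chebyshev argument (computing $\Var(X_i)=O(n)$) to pin down the median to within $n^{3/4}$ of $p|\mathcal{F}|$. Your route is the more streamlined one for this particular statement. The paper's choice is explained by the fact that they package the concentration step as a lemma about $k$-tuples (\Cref{lem:balanced_coloring}), which they later reuse with $k=4$ to count switcher $C_4$'s in the proof of \Cref{thm:high_discrepancy}; your Azuma argument would generalize there just as well.
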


To prove our main theorems, we employed randomness in several ways. In particular, in the proof of \Cref{thm:high_discrepancy}, we make use of Talagrand's inequality for random permutations (\Cref{lem:Talagrand}) and also adapt the dependent random choice technique. We believe our proof strategy has potential for broader applications, particularly in addressing discrepancy versions of various other problems.

\section{Preliminaries}\label{sec:prelim}
\subsection{Notations}\label{subsec:notations}
Given a positive integer $n$, let $[n]:=\{1,2, \ldots, n\}$. 
Let $G$ be a graph with a signing function $\sigma: E(G) \to \{-1,+1\}$. An edge $e$ of $G$ is called a \emph{positive edge (negative edge, resp.)} if $\sigma(e) = +1$ ($\sigma(e) = -1$, resp.). 
For a subgraph $H \subseteq G$, define $S_\sigma(H) := \sum_{e \in E(H)} \sigma(e)$.  
For a vertex $v \in V(G)$, define $N^+(v):=\{u\in V(G): uv \in E(G), \sigma(uv) = +1\}$ and $d^+(v) = |N^+(v)|$. Define $N^-(u)$ and $d^-(u)$ similarly.
Let $S_n$ denote the symmetric group of degree $n$.
For real numbers $a, b$ and $c>0$ denote $a = b \pm c$ if $a \in [b-c, b+c]$.
We omit floor and ceiling signs if these are not crucial and make no attempt to optimize the absolute constants occurring in the statements.

\subsection{Concentration inequalities}\label{subsec:inequalities}

We need to collect several concentration inequalities. 

\begin{lemma}[Chebyshev's inequality]\label{lem:Chebyshev}
    Let $X$ be a random variable with mean $\mu$ and variance $\sigma^2$, then for any $\lambda>0$, 
    \[
    \PP(|X-\mu| \geq \lambda) \leq \frac{\sigma^2}{\lambda^2}.
    \]
\end{lemma}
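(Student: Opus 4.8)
The statement to prove is Chebyshev's inequality, which follows immediately from Markov's inequality applied to the nonnegative random variable $(X-\mu)^2$. First I would recall (or establish in one line) Markov's inequality: for any nonnegative random variable $Y$ and any $t > 0$, we have $\PP(Y \geq t) \leq \EE[Y]/t$. This itself is a one-line computation: $\EE[Y] \geq \EE[Y \cdot \mathbf{1}_{\{Y \geq t\}}] \geq t \cdot \PP(Y \geq t)$, where the first inequality uses $Y \geq 0$ and the second uses that on the event $\{Y \geq t\}$ the integrand is at least $t$.

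\textbf{Main step.} Fix $\lambda > 0$ and set $Y := (X-\mu)^2$, which is nonnegative, with $\EE[Y] = \Var(X) = \sigma^2$ by definition of variance. The key observation is that the events $\{|X - \mu| \geq \lambda\}$ and $\{(X-\mu)^2 \geq \lambda^2\}$ are identical, since squaring is monotone on the nonnegative reals and $|X - \mu| \geq 0$. Applying Markov's inequality to $Y$ with threshold $t = \lambda^2 > 0$ then yields
\[
    \PP(|X - \mu| \geq \lambda) = \PP\bigl((X-\mu)^2 \geq \lambda^2\bigr) \leq \frac{\EE[(X-\mu)^2]}{\lambda^2} = \frac{\sigma^2}{\lambda^2},
\]
which is exactly the claimed bound.

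\textbf{Remarks on obstacles.} There is essentially no obstacle here: the only points requiring mild care are (i) ensuring $\lambda > 0$ so that division by $\lambda^2$ is legitimate (this is part of the hypothesis), and (ii) noting that the inequality is vacuous but still true when $\sigma^2 = \infty$ or when $\sigma^2/\lambda^2 \geq 1$. If one prefers a fully self-contained argument avoiding an explicit appeal to Markov's inequality, one can instead write $\sigma^2 = \EE[(X-\mu)^2] \geq \EE\bigl[(X-\mu)^2 \mathbf{1}_{\{|X-\mu| \geq \lambda\}}\bigr] \geq \lambda^2 \, \PP(|X-\mu| \geq \lambda)$ and divide through by $\lambda^2$; this is the same proof with Markov's inequality inlined.
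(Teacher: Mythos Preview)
Your proof is correct and is the standard textbook argument; the paper itself states Chebyshev's inequality without proof, treating it as a well-known concentration inequality, so there is nothing to compare against.
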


The following concentration inequality on random permutations is a simplified version of a theorem proved by McDiarmid~\cite[Theorem 4.1]{McDiarmid-Talagrand}.

\begin{lemma}[Talagrand's inequality on random permutations~\cite{McDiarmid-Talagrand}]\label{lem:Talagrand}
    Let $h:S_n \to \mathbb{R}_{\geq 0}$ be a function, and let $\pi \in S_n$ be a random permutation. 
    Suppose that there is a constant $c, r>0$ such that the following two conditions hold.
    \begin{itemize}
        \item For any permutations $\pi$ and $\pi'$ differing at $d$ coordinates, then we have $|h(\pi)-h(\pi')| \leq cd$.
        \item If $h(\pi) = s$, then we can find a set $S$ with at most $rs$ coordinates such that for any permutation $\pi' \in S_n$ agreeing with $\pi$ on $S$, we have $h(\pi') \geq s$.
    \end{itemize} 
     Let $M$ be the median of $h(\pi)$. Then for every $t>0$, we have $\PP(|h(\pi)-M| \geq t) \leq 6\exp(-\frac{t^2}{16rc^2(M+t)})$. 
\end{lemma}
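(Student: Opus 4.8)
The plan is to derive \Cref{lem:Talagrand} from a permutation analogue of Talagrand's convex‑distance inequality, following the strategy of McDiarmid~\cite{McDiarmid-Talagrand}. For $\alpha \in \mathbb{R}_{\geq 0}^n$ and $\pi, \pi' \in S_n$ write $d_\alpha(\pi, \pi') := \sum_{i:\, \pi(i) \neq \pi'(i)} \alpha_i$, and for a nonempty $\calA \subseteq S_n$ define the \emph{convex distance} $d_T(\pi, \calA) := \sup_{\|\alpha\|_2 \leq 1} \inf_{\pi' \in \calA} d_\alpha(\pi, \pi')$. The key ingredient is the estimate
\[
    \PP(\pi \in \calA)\cdot \EE\bigl[\exp(\tfrac14 d_T(\pi, \calA)^2)\bigr] \leq 1 \qquad\text{for a uniformly random }\pi \in S_n,
\]
which in particular gives $\PP(d_T(\pi, \calA) \geq u) \leq e^{-u^2/4}/\PP(\pi \in \calA)$ for all $u > 0$. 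Granting this, the lemma follows by translating its two hypotheses into a lower bound on $d_T$ and optimizing the threshold.

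First I would establish the convex‑distance estimate, which is the main obstacle: $S_n$ is \emph{not} a product probability space, so Talagrand's original coordinate‑by‑coordinate induction cannot be applied verbatim. The fix is an induction on $n$ in which one conditions on the value $j = \pi(n)$; conditionally, $(\pi(1),\dots,\pi(n-1))$ is a uniformly random ordering of $[n]\setminus\{j\}$, which we identify with a uniform element of $S_{n-1}$. Writing $\calA_j := \{\pi' \in \calA : \pi'(n) = j\}$, one relates $d_T(\pi, \calA)$ to the convex distances of the sections $\calA_j$ inside $S_{n-1}$ via the same Hölder/convexity step as in Talagrand's proof (using an inequality of the form $\inf_{t\in[0,1]} t\, e^{(1-t)^2/4}\,(\cdots) \le 1$); the extra bookkeeping relative to the product‑space case is controlling how re‑routing the value at position $n$ affects agreement with a competitor $\pi'$ in only a bounded number of further coordinates. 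This is precisely the ingredient behind McDiarmid's Theorem~4.1, so I would either reproduce that induction or cite it.

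Next I would convert the hypotheses into a distance bound. Fix a threshold $b$ and set $\calA := \{\pi' \in S_n : h(\pi') \leq b\}$. Let $\pi$ satisfy $h(\pi) = s > b$, and let $S$, with $|S| \leq rs$, be the certifying set from the second hypothesis. For $\pi' \in \calA$ put $D := \{i : \pi(i) \neq \pi'(i)\}$. Modifying $\pi'$ only on the coordinates of $D \cap S$, together with at most $|D \cap S|$ further coordinates needed to repair it into a permutation, yields a permutation $\pi''$ that agrees with $\pi$ on $S$, hence $h(\pi'') \geq s$ by certifiability; but the Lipschitz hypothesis gives $h(\pi'') \leq h(\pi') + 2c|D\cap S| \leq b + 2c|D\cap S|$, so $|D \cap S| \geq (s-b)/(2c)$. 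Taking $\alpha$ equal to $|S|^{-1/2}$ on $S$ and $0$ elsewhere (so $\|\alpha\|_2 = 1$) we get $d_\alpha(\pi,\pi') = |D\cap S|\,|S|^{-1/2} \geq (s-b)/(2c\sqrt{rs})$ for every $\pi' \in \calA$, whence $d_T(\pi,\calA) \geq (s-b)/(2c\sqrt{rs})$. Equivalently $\{\pi : h(\pi) \geq s\} \subseteq \{\pi : d_T(\pi,\calA) \geq (s-b)/(2c\sqrt{rs})\}$.

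Finally I would combine the two steps. For the upper tail, take $b = M$; since $\PP(h(\pi) \leq M) \geq \tfrac12$, applying the convex‑distance estimate with $s = M+t$ gives $\PP(h(\pi) \geq M+t) \leq 2\exp\bigl(-\tfrac{t^2}{16 c^2 r(M+t)}\bigr)$, using $M \leq M+t$ in the denominator. For the lower tail, take instead $\calA := \{\pi' : h(\pi') \leq M - t\}$ and $s = M$; the same estimate together with $\PP(h(\pi) \geq M) \geq \tfrac12$ yields $\PP(h(\pi) \leq M-t) \leq 2\exp\bigl(-\tfrac{t^2}{16 c^2 r(M+t)}\bigr)$. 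Adding the two bounds gives $\PP(|h(\pi) - M| \geq t) \leq 4\exp\bigl(-\tfrac{t^2}{16 r c^2(M+t)}\bigr) \leq 6\exp\bigl(-\tfrac{t^2}{16 r c^2(M+t)}\bigr)$, as claimed; any slack in the numerical constants coming from the exact form of the convex‑distance estimate one proves is harmless. The hard part is entirely the first step — the convex‑distance concentration inequality on the non‑product space $S_n$ — while steps three and four are short reductions.
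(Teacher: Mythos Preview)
The paper does not prove \Cref{lem:Talagrand} at all: it is stated as a black-box citation of McDiarmid~\cite[Theorem~4.1]{McDiarmid-Talagrand} and then applied. Your proposal is a faithful reconstruction of McDiarmid's argument (convex-distance inequality on $S_n$, followed by the standard certifiability-plus-Lipschitz reduction with the extra factor of $2$ coming from repairing a permutation via transpositions), so there is nothing to compare against in the paper itself; your sketch is correct and matches the source the paper cites.
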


\subsection{Decompositions of complete graphs}
We also require the following lemma, which can be deduced from the strong form of resolution of the Oberwolfach problem~{\cite[Theorem 1.3]{Oberwolfach}}. 
\begin{lemma}\label{Lem:C4-decomp}
    The following holds for sufficiently large $n$.
    \begin{itemize}
        \item If $n$ is even, then $K_{2n}$ can be decomposed into $(n-2)$ copies of $C_4$-factors and one copy of $K_4$-factor.
        \item If $n$ is odd, then $K_{2n}$ can be decomposed into $(n-2)$ copies of $\frac{n-3}{2} C_4 \cup C_6$ and one copy of $\frac{n-3}{2}K_4 \cup K_{3, 3}$.
    \end{itemize}
\end{lemma}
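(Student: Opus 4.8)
The plan is to prove \Cref{Lem:C4-decomp} by invoking the strong resolution of the Oberwolfach problem \cite[Theorem 1.3]{Oberwolfach}, which asserts that for every sufficiently large $m$ and every $2$-regular graph $R$ on $m$ vertices, the complete graph $K_m$ (if $m$ is odd) or $K_m$ minus a perfect matching (if $m$ is even) decomposes into copies of $R$; moreover the strong form gives a simultaneous decomposition for a whole \emph{list} of such $2$-regular graphs, provided the total number of edge-disjoint $2$-factors requested does not exceed $\lfloor (m-1)/2 \rfloor$. The idea is to feed this theorem the right list of $2$-factors and then peel off one more factor by hand to handle the leftover.

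First I would treat the case $n$ even, so that $2n \equiv 0 \pmod 4$. Since $2n$ is even, $K_{2n}$ decomposes into $2n-1$ perfect matchings; I group these $2n-1$ matchings as follows. Pair up $2n-4$ of the matchings into $n-2$ pairs; each pair is a $2$-regular graph on $2n$ vertices, and by applying the Oberwolfach theorem with $R = C_4$-factor (which exists since $4 \mid 2n$) I can arrange that each of these $n-2$ two-factors is in fact a disjoint union of $4$-cycles — this uses that $n-2 \le \lfloor (2n-2)/2\rfloor = n-1$, so the Oberwolfach budget is not exceeded. The remaining $3$ perfect matchings together form a $3$-regular graph on $2n$ vertices; I need this to be rearrangeable into a $K_4$-factor. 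A cleaner route: apply \cite[Theorem 1.3]{Oberwolfach} directly to $K_{2n}$ minus a perfect matching, requesting $n-2$ copies of the $C_4$-factor, which leaves a $2$-regular remainder; combining that remainder with the removed perfect matching gives a $3$-regular graph, and since $K_4$-factors are $3$-regular with $4 \mid 2n$ one checks the strong Oberwolfach statement (in the variant allowing a $K_4$-factor as one of the listed graphs, as in \cite{Oberwolfach}) lets us demand the final factor be a $K_4$-factor outright. In short, we request the list $(F_1,\dots,F_{n-2}, F_{n-1})$ with $F_1 = \dots = F_{n-2} = $ the $C_4$-factor and $F_{n-1} = $ the $K_4$-factor, whose total degree is $2(n-2) + 3 = 2n-1$, exactly the degree of $K_{2n}$, and \cite[Theorem 1.3]{Oberwolfach} supplies the decomposition.

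For $n$ odd, $2n \equiv 2 \pmod 4$, so a pure $C_4$-factor on $2n$ vertices does not exist; instead the natural $2$-factor is $\frac{n-3}{2}C_4 \cup C_6$, which has $4\cdot\frac{n-3}{2} + 6 = 2n$ vertices as required, and the natural $3$-regular factor is $\frac{n-3}{2}K_4 \cup K_{3,3}$, again on $2\cdot\frac{n-3}{2}\cdot \? $ — here $4\cdot \frac{n-3}{2} + 6 = 2n$ vertices, and it is $3$-regular since both $K_4$ and $K_{3,3}$ are cubic. The argument is then identical: request the list consisting of $n-2$ copies of $\frac{n-3}{2}C_4 \cup C_6$ together with one copy of $\frac{n-3}{2}K_4 \cup K_{3,3}$; the total degree is $2(n-2)+3 = 2n-1 = \Delta(K_{2n})$, and these graphs are all admissible inputs to the strong Oberwolfach theorem \cite[Theorem 1.3]{Oberwolfach}, so the desired decomposition exists for all sufficiently large $n$.

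The main obstacle, and the only real content, is checking that \cite[Theorem 1.3]{Oberwolfach} applies with the specified list of factors: one must verify that each listed graph is an admissible "Oberwolfach-type" $2$- or $3$-regular spanning subgraph in the precise sense of that theorem (in particular that it permits a $K_4$- or $K_{3,3}$-component, which the strong form does), that the divisibility conditions $4 \mid 2n$ (resp.\ $n$ odd) are exactly what make the relevant components fit on $2n$ vertices, and that the total degree count matches $e(K_{2n})$ so that nothing is left over. Once the hypotheses of \cite[Theorem 1.3]{Oberwolfach} are seen to hold, the conclusion is immediate. I expect the write-up to be short, with the bulk being the bookkeeping that the component sizes sum to $2n$ and that $n-2$ does not exceed the allowed number of factors.
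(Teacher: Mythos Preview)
The paper does not actually prove \Cref{Lem:C4-decomp}; it simply asserts that the lemma ``can be deduced from the strong form of resolution of the Oberwolfach problem \cite[Theorem~1.3]{Oberwolfach}'' and moves on. Your proposal is precisely this deduction---invoke the same theorem with the list of $(n-2)$ prescribed $2$-factors together with the one $3$-regular factor and check the degree/divisibility bookkeeping---so your approach coincides with the paper's, and the concern you flag (whether the cited theorem in its stated form accommodates the $K_4$- or $K_{3,3}$-components) is exactly the point the paper leaves to the reader.
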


\section{Proof of main results}

\subsection{Proof of Theorem~\ref{thm:balanced_coloring}}
In this section, we prove \Cref{thm:balanced_coloring}. However, for later usage, we will provide an extension of it. First, we introduce some relevant terminology.
For a finite set $X$, let $S_X$ be a set of all permutations on $X$ and let $X^{(k)}$ be the set of $k$-tuples $(x_1, \ldots, x_k)$ of elements of $X$ such that all $x_i$ are distinct. In this paper, all tuples are ordered. We denote $n^{(k)} := n(n-1)\cdots(n-k+1)$, where $n \geq k \geq 1$ are positive integers.
For a permutation $\pi \in S_X$ and a family of $k$-tuples $\mathcal{F} \subseteq X^{(k)}$, we define $\pi(\mathcal{F}) = \{(\pi(x_1), \ldots, \pi(x_k)) \mid (x_1, \ldots, x_k) \in \mathcal{F}\}$.

We are now ready to state our key lemma as follows.
\begin{lemma}\label{lem:balanced_coloring}
    For every $\Delta, \varepsilon, k$, there exists $n_0=n_0(\Delta, \varepsilon, k) > 0$ and $c = c(\Delta, \varepsilon, k)>0$ such that the following holds for all $n \geq n_0$ and $p \in [0, 1]$.
    Let $X$ be an $n$-element set and $\mathcal{F} \subseteq X^{(k)}$ be a set of tuples such that $|\mathcal{F}| \geq \varepsilon n$ and let every $x \in X$ be contained in at most $\Delta$ elements of $\mathcal{F}$.
    Let $\sigma:X^{(k)} \to \{-1, +1\}$ be a signing with $|\sigma^{-1}(+1)|=pn^{(k)}$. 
    Let $\pi \in S_X$ be a permutation chosen uniformly at random. 
    Then with probability at least $1-\exp(-cn)$, we have $|\sigma^{-1}(+1) \cap \pi(\mathcal{F})| = (p \pm \varepsilon)|\mathcal{F}|$.
\end{lemma}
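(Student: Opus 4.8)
The plan is to apply Talagrand's inequality for random permutations (\Cref{lem:Talagrand}) to the nonnegative function $h\colon S_X \to \mathbb{R}_{\geq 0}$ defined by $h(\pi) := |\sigma^{-1}(+1) \cap \pi(\mathcal{F})|$, which counts the tuples of $\mathcal{F}$ that become positive after applying $\pi$. We may assume $\varepsilon < 1$, since otherwise the conclusion is immediate from $0 \le h(\pi) \le |\mathcal{F}|$ and $p \in [0,1]$. First I would pin down the expectation: for each fixed $f = (x_1, \dots, x_k) \in \mathcal{F}$, the tuple $(\pi(x_1), \dots, \pi(x_k))$ is uniformly distributed over $X^{(k)}$ when $\pi$ is uniform on $S_X$, so it lands in $\sigma^{-1}(+1)$ with probability $|\sigma^{-1}(+1)| / n^{(k)} = p$; by linearity of expectation, $\EE[h(\pi)] = p|\mathcal{F}|$.

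Next I would verify the two hypotheses of \Cref{lem:Talagrand}. For the Lipschitz condition, suppose $\pi$ and $\pi'$ differ on exactly $d$ coordinates and set $D = \{x \in X : \pi(x) \neq \pi'(x)\}$, so $|D| = d$. If $\pi(f) \neq \pi'(f)$ for some $f \in \mathcal{F}$, then $f$ must contain an element of $D$ in one of its coordinates; since each element of $X$ lies in at most $\Delta$ tuples of $\mathcal{F}$, at most $\Delta d$ tuples are affected, and each changes $h$ by at most $1$, so $|h(\pi) - h(\pi')| \le \Delta d$. For the certifiability condition, suppose $h(\pi) = s$ and let $S$ be the set of all coordinates appearing in the $s$ tuples of $\mathcal{F}$ that $\pi$ maps into $\sigma^{-1}(+1)$; then $|S| \le ks$, and any $\pi' \in S_X$ that agrees with $\pi$ on $S$ sends each of those $s$ tuples to the same positive tuple, hence $h(\pi') \ge s$. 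Thus \Cref{lem:Talagrand} applies with its constant $c$ taken to be $\Delta$ and its constant $r$ taken to be $k$.

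Writing $M$ for the median of $h(\pi)$, \Cref{lem:Talagrand} then yields $\PP(|h(\pi) - M| \ge t) \le 6\exp\bigl(-\tfrac{t^2}{16 k\Delta^2 (M+t)}\bigr)$ for all $t > 0$. Taking $t := \tfrac{\varepsilon}{2}|\mathcal{F}|$ and using $M \le |\mathcal{F}|$ together with $|\mathcal{F}| \ge \varepsilon n$, the right-hand side is at most $6\exp\bigl(-\tfrac{\varepsilon^3 n}{128 k\Delta^2}\bigr) \le \exp(-cn)$ for a suitable $c = c(\Delta,\varepsilon,k) > 0$ and all large $n$. To upgrade this to concentration around $p|\mathcal{F}|$, I would bound $\bigl| M - p|\mathcal{F}| \bigr| = \bigl| M - \EE[h] \bigr| \le \EE|h - M| = \int_0^\infty \PP(|h - M| \ge t)\, dt$; splitting the integral at $t = M$ and using $M + t \le 2M$ for $t \le M$ and $M + t \le 2t$ for $t > M$ bounds it by $O_{\Delta,k}(\sqrt{|\mathcal{F}|})$, which is smaller than $\tfrac{\varepsilon}{2}|\mathcal{F}|$ once $|\mathcal{F}| \ge \varepsilon n$ is sufficiently large (guaranteed by taking $n_0$ large). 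Combining the two estimates gives $\bigl| h(\pi) - p|\mathcal{F}| \bigr| \le \varepsilon|\mathcal{F}|$ with probability at least $1 - \exp(-cn)$, which is the desired conclusion.

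This is a fairly direct application of the permutation Talagrand inequality, so I do not anticipate a genuine obstacle; the two points that require care are checking the certifiability hypothesis with the linear bound $|S| \le ks$ (which crucially uses that each tuple involves only $k$ coordinates, so that the cost is proportional to $h(\pi)$) and the standard median-versus-mean comparison, which is needed because \Cref{lem:Talagrand} concentrates $h$ around its median rather than around $p|\mathcal{F}|$.
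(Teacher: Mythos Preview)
Your proof is correct and follows essentially the same architecture as the paper: define $h(\pi)=|\sigma^{-1}(+1)\cap\pi(\mathcal{F})|$, verify the Lipschitz and certifiability hypotheses of \Cref{lem:Talagrand} with constants $\Delta$ and $k$, and deduce concentration of $h$ around its median $M$; then transfer this to concentration around $\EE[h]=p|\mathcal{F}|$. The one genuine difference is in how you close the gap between $M$ and $p|\mathcal{F}|$. The paper does this via a second-moment computation: it bounds $\Var(h)$ by separately estimating $\Cov(X_e,X_f)$ for disjoint and intersecting pairs $e,f\in\mathcal{F}$, obtains $\Var(h)=O_{\Delta,k}(n)$, and then invokes Chebyshev to get $M=p|\mathcal{F}|\pm n^{3/4}$. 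You instead bootstrap directly from the Talagrand tail bound, using $|M-\EE[h]|\le\EE|h-M|=\int_0^\infty\PP(|h-M|\ge t)\,dt$ and splitting the integral at $t=M$ to obtain $|M-\EE[h]|=O_{\Delta,k}(\sqrt{|\mathcal{F}|})$. Your route is slightly slicker in that it avoids the covariance calculation entirely and needs only the Talagrand inequality; the paper's route is more explicit about constants and makes the variance structure transparent. Both yield the same qualitative bound and the same final conclusion.
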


\begin{proof}[Proof of \Cref{lem:balanced_coloring}]
        Let $h(\pi) = |\sigma^{-1}(+1) \cap \pi(\mathcal{F})|$ and $M$ be the median of $h(\pi)$. We claim that $M$ is close to $p|\mathcal{F}|$.
    \begin{claim}\label{clm:median}
        $M = p |\mathcal{F}| \pm n^{3/4}$.
    \end{claim}
    \allowdisplaybreaks
    \begin{claimproof}[Proof of \Cref{clm:median}]
        For each element $e \in \mathcal{F}$, let $X_e$ be an indicator random variable for the event that $\sigma(\pi(e))=+1$.
        By the linearity of expectation, we have
        \[
        \EE[h] = \sum_{e \in \mathcal{F}} \EE[X_e] = p |\mathcal{F}|. 
        \]
        We now bound the variance of $h(\pi)$.
        Note that if $e \cap f = \emptyset$, then the covariance is 
        \begin{align*}
            \Cov(X_e,X_f) & =  \EE[X_e X_f] - \EE[X_e] \EE[X_f] \\
            & \leq \frac{pn^{(k)}(pn^{(k)}-1)}{n^{(2k)}} - p^2 \\
            & \leq p^2 \frac{n^k}{(n-2k)^k} - p^2 \\
            & \leq \frac{10k^2}{n}
        \end{align*}
        for sufficiently large $n$.
        For the case that $e \cap f \neq \emptyset$, as each element of $X$ is contained in at most $\Delta$ element of $\mathcal{F}$, there are at most $k\Delta |\mathcal{F}|$ pairs of elements $(e, f)$ of $\mathcal{F}$ that have nonempty intersection. 
        Therefore, 
        \begin{align*}
            \Var(h) &=  \sum_{e \in E(F)} \Var(X_e) + \sum_{e \neq f} \Cov(X_e, X_f) \\
            &\leq \EE[h] + \sum_{e \cap f \neq \emptyset} \Cov(X_e,X_f) +  \sum_{e \cap f = \emptyset} \Cov(X_e,X_f) \\
            & \leq p|\mathcal{F}| + k\Delta |\mathcal{F}| + \frac{10k^2}{n} |\mathcal{F}|^2 \\
             & \leq ((1+k\Delta)\Delta + 10k^2\Delta^2)n.
        \end{align*}
        Then by \Cref{lem:Chebyshev}, we have
        \[
            \PP(\left|h(\pi) - p|\mathcal{F}|\right| \geq n^{3/4}) \leq \frac{\Var(h)}{n^{3/2}} = O\left( \frac{1}{\sqrt{n}} \right).  
        \]
        Therefore, for sufficiently large $n$, the probability $\PP(|h(\pi) - p |\mathcal{F}|| \leq n^{3/4})$ is at least $1/2$, which implies that the median $M$ lies in the interval $[p|\mathcal{F}|-n^{3/4}, p|\mathcal{F}|+n^{3/4}]$.
    \end{claimproof}
    We now apply \Cref{lem:Talagrand}.
    If $\pi$ and $\pi'$ differ in at most $d$ coordinates, then there are at most $d\Delta$ elements $e$ of $\mathcal{F}$ satisfying $\pi(e) \neq \pi'(e)$. Thus, $|h(\pi) - h(\pi')| \leq 2d\Delta$.
    If $h(\pi) = s$, let $S$ be the set all $x\in X$ that are contained in one of the elements of $\pi(\mathcal{F}) \cap \sigma^{-1}(+1)$ and so $|S| \leq ks$. 
    If a permutation $\pi'$ agrees with $\pi$ on $S$, then $\pi'(\mathcal{F}) \cap \sigma^{-1}(+1)$ also contains all $s$ elements of $\pi(\mathcal{F}) \cap \sigma^{-1}(+1)$. Thus, $h(\pi') \geq s$.
    Therefore, by \Cref{lem:Talagrand}, we have
    \[
        \PP(|h(\pi) - p|\mathcal{F}| |\geq \varepsilon |\mathcal{F}|) \leq \PP(|h(\pi)-M| \geq \varepsilon^2 n/2) \leq 6\exp\left(-\frac{\varepsilon^4 n^2}{100k\Delta^2(M+\varepsilon^2 n)}\right) \leq \exp(-cn),
    \]
    which concludes the proof.
\end{proof}

By taking a union bound, we can simply deduce \Cref{thm:balanced_coloring}. Indeed, we can obtain a bit stronger result, which is the following.

\begin{theorem}\label{thm:balanced_coloring_stronger}
    Let $\Delta, \varepsilon, n_o, n$, $\{F_1, \dots, F_m\}$, and $\sigma$ are the same parameters, graph collection, and $\{-1, +1\}$ edge coloring as in the statement of \Cref{thm:balanced_coloring}. Assume initially $K_n$ has an edge decomposition $\{F_1, \dots, F_m\}$. Then the following holds.
    
    Let $\pi \in S_{V(K_n)}$ be a random permutation chosen uniformly at random. Then with probability at least $(1 - o(1))$, for all $i\in [m]$, we have $\mathrm{disc}^{\pm}(\pi(F_i)) = \mathrm{disc}^{\pm}(K_n) \pm \varepsilon$.
\end{theorem}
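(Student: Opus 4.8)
The plan is to obtain \Cref{thm:balanced_coloring_stronger} as an essentially direct corollary of \Cref{lem:balanced_coloring} applied with $k=2$, after translating between edges and ordered pairs and between positive-edge counts and signed discrepancy.

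\textbf{Setup.} First I would extend the edge coloring $\sigma$ to a signing $\hat{\sigma}:V(K_n)^{(2)}\to\{-1,+1\}$ by $\hat{\sigma}(u,v)=\hat{\sigma}(v,u)=\sigma(uv)$. Writing $q$ for the number of positive edges of $K_n$, we get $|\hat{\sigma}^{-1}(+1)| = 2q = p\,n^{(2)}$ with $p = q/\binom{n}{2} \in [0,1]$, and a one-line computation gives $\disc^{\pm}(K_n) = \frac{2q}{\binom n2}-1 = 2p-1$. For each $i\in[m]$, let $\mathcal{F}_i\subseteq V(K_n)^{(2)}$ consist of both orientations of each edge of $F_i$, so that $|\mathcal{F}_i| = 2e(F_i) \geq 2\varepsilon n$ and, since $\Delta(F_i)\leq\Delta$, every vertex of $K_n$ is contained in at most $2\Delta$ tuples of $\mathcal{F}_i$. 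For any permutation $\pi$, the set $\hat{\sigma}^{-1}(+1)\cap\pi(\mathcal{F}_i)$ has size exactly twice the number of positive edges of $\pi(F_i)$, hence
\[
\disc^{\pm}(\pi(F_i)) \;=\; \frac{|\hat{\sigma}^{-1}(+1)\cap\pi(\mathcal{F}_i)|}{e(F_i)} - 1 \;=\; \frac{2\,|\hat{\sigma}^{-1}(+1)\cap\pi(\mathcal{F}_i)|}{|\mathcal{F}_i|} - 1 .
\]

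\textbf{Applying the key lemma and union bound.} Next I would invoke \Cref{lem:balanced_coloring} with parameters $\Delta\mapsto 2\Delta$, $\varepsilon\mapsto\varepsilon/2$, $k=2$, ground set $X=V(K_n)$, family $\mathcal{F}_i$, and signing $\hat{\sigma}$. This yields a constant $c=c(\Delta,\varepsilon)>0$ and a threshold $n_0=n_0(\Delta,\varepsilon)$ such that, for $n \geq n_0$ and $\pi\in S_{V(K_n)}$ uniformly random, the event $|\hat{\sigma}^{-1}(+1)\cap\pi(\mathcal{F}_i)| = (p\pm\varepsilon/2)|\mathcal{F}_i|$ holds with probability at least $1-\exp(-cn)$. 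On this event the displayed identity gives $\disc^{\pm}(\pi(F_i)) = 2(p\pm\varepsilon/2)-1 = (2p-1)\pm\varepsilon = \disc^{\pm}(K_n)\pm\varepsilon$. Since the graphs $F_i$ partition the $\binom n2$ edges of $K_n$ and each satisfies $e(F_i)\geq\varepsilon n$, we have $m \leq \binom n2/(\varepsilon n) = O(n)$, so a union bound over $i\in[m]$ shows that the desired property fails with probability at most $m\exp(-cn) = o(1)$, completing the proof (after enlarging $n_o$ to exceed $n_0$).

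\textbf{Main obstacle.} There is no substantive difficulty beyond \Cref{lem:balanced_coloring} itself, which does all the heavy lifting via Talagrand's inequality. The only points needing care are the bookkeeping in passing from unordered edges to ordered pairs (the factor $2$ appears consistently in the degree bound, in $|\mathcal{F}_i|$, and in the positive-edge count, so it cancels in the discrepancy ratio); feeding $\varepsilon/2$ rather than $\varepsilon$ into the lemma so the final additive error is exactly $\varepsilon$; and verifying $m = O(n)$ so that the exponentially small per-index failure probabilities survive the union bound.
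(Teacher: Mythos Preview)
Your proposal is correct and follows essentially the same approach as the paper's proof: both pass to ordered pairs, apply \Cref{lem:balanced_coloring} with $k=2$ and maximum degree $2\Delta$, and finish with a union bound over the $O(n)$ many $F_i$. Your write-up is in fact slightly more careful than the paper's, since you feed $\varepsilon/2$ into the lemma so that the factor of $2$ in converting positive-edge counts to signed discrepancy yields an error of exactly $\pm\varepsilon$ rather than $\pm 2\varepsilon$.
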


Note that \Cref{thm:balanced_coloring} is directly deduced from \Cref{thm:balanced_coloring_stronger}.

\begin{proof}[Proof of \Cref{thm:balanced_coloring_stronger}]
    Let $c=c(2\Delta, \varepsilon, 2)>0$ be a constant obtained from \Cref{lem:balanced_coloring}.
    For each $F_i$, let 
    $$\mathcal{F}_i = \left\{(u, v) \in V(K_n)^{(2)} \mid \{u, v\} \in E(F_i)\right\} \subseteq V(K_n)^{(2)}$$
    be the set of ordered edges.
    We also define $\sigma':V(K_n)^{(2)} \to \{-1, +1\}$ by letting $\sigma'((u, v)) = \sigma(\{u, v\})$.
    We observe that for any permutation $\pi \in S_{V(K_n)}$, we have $|\pi(\mathcal{F}_i) \cap {\sigma'}^{-1}(+1)| = 2|\pi(F_i) \cap \sigma^{-1}(+1)|$.
    
    Then by \Cref{lem:balanced_coloring}, for a uniform random permutation $\pi \in S_{V(K_n)}$, we have $\disc^{\pm}(\pi(F_i)) = \disc^{\pm}(K_n) \pm \varepsilon$ with probability at least $1-\exp(-cn)$ for each $i \in [m]$.
    As $m \leq \varepsilon^{-1} n$, by the union bound, with probability at least $1 - (\varepsilon^{-1}n \exp(-cn)) = 1 - o(1)$ such that $\disc^{\pm}(\pi(F_i)) = \disc^{\pm}(K_n) \pm \varepsilon$ for all $i \in [m]$.
\end{proof}


\subsection{Proof of Theorem~\ref{thm:high_discrepancy}}
If $\disc(K_{2n}) = \Omega(1)$, then by Theorem~\ref{thm:balanced_coloring}, we have a $1$-factor decomposition with high discrepancy so \Cref{thm:high_discrepancy} is clear. 
Hence, we may assume that $\disc(K_{2n}) = o(1)$.  
The key lemma of this section shows that under this condition, the number of $C_4$s in $K_{2n}$ with specific signing is large.
We call a cycle $C$ of length $4$ in $K_{2n}$ a \emph{switcher} if for its $1$-factor decomposition $\{\psi_1, \psi_2\}$, we have $S_{\sigma}(\psi_1) \neq S_{\sigma}(\psi_2)$.
There are six kinds of colored $C_4$ in colored $K_{2n}$ as illustrated in Figure~\ref{fig:all_C4}. The latter three types of colored $C_4$ are switchers while the first three are not. 
\begin{figure}[h]
    \centering
    \begin{subfigure}[t]{0.3\textwidth}
        \centering
        \begin{tikzpicture}
            \draw (0,0) rectangle (2,2);

            \node at (1,-0.3) {$+$};
            \node at (2.3,1) {$+$};
            \node at (1,2.3) {$+$};
            \node at (-0.3,1) {$+$}; 
        \end{tikzpicture}
        \caption{Type 1}
        \label{fig:tikz_a}
    \end{subfigure}
    \begin{subfigure}[t]{0.3\textwidth}
        \centering
        \begin{tikzpicture}
            \draw (0,0) rectangle (2,2);

            \node at (1,-0.3) {$-$};
            \node at (2.3,1) {$-$};
            \node at (1,2.3) {$-$};
            \node at (-0.3,1) {$-$}; 
        \end{tikzpicture}
        \caption{Type 2}
        \label{fig:tikz_b}
    \end{subfigure}
    \begin{subfigure}[t]{0.3\textwidth}
        \centering
        \begin{tikzpicture}
            \draw (0,0) rectangle (2,2);

            \node at (1,-0.3) {$+$};
            \node at (2.3,1) {$+$};
            \node at (1,2.3) {$-$};
            \node at (-0.3,1) {$-$}; 
        \end{tikzpicture}
        \caption{Type 3}
        \label{fig:tikz_c}
    \end{subfigure}
    \begin{subfigure}[t]{0.3\textwidth}
        \centering
        \begin{tikzpicture}
            \draw (0,0) rectangle (2,2);

            \node at (1,-0.3) {$+$};
            \node at (2.3,1) {$-$};
            \node at (1,2.3) {$+$};
            \node at (-0.3,1) {$-$}; 
        \end{tikzpicture}
        \caption{Type 4}
        \label{fig:tikz_d}
    \end{subfigure}
    \begin{subfigure}[t]{0.3\textwidth}
        \centering
        \begin{tikzpicture}
            \draw (0,0) rectangle (2,2);

            \node at (1,-0.3) {$+$};
            \node at (2.3,1) {$+$};
            \node at (1,2.3) {$+$};
            \node at (-0.3,1) {$-$}; 
        \end{tikzpicture}
        \caption{Type 5}
        \label{fig:tikz_e}
    \end{subfigure}
    \begin{subfigure}[t]{0.3\textwidth}
        \centering
        \begin{tikzpicture}
            \draw (0,0) rectangle (2,2);

            \node at (1,-0.3) {$-$};
            \node at (2.3,1) {$-$};
            \node at (1,2.3) {$-$};
            \node at (-0.3,1) {$+$}; 
        \end{tikzpicture}
        \caption{Type 6}
        \label{fig:tikz_f}
    \end{subfigure}
    \caption{Six kinds of colored $C_4$}
    \label{fig:all_C4}
\end{figure}
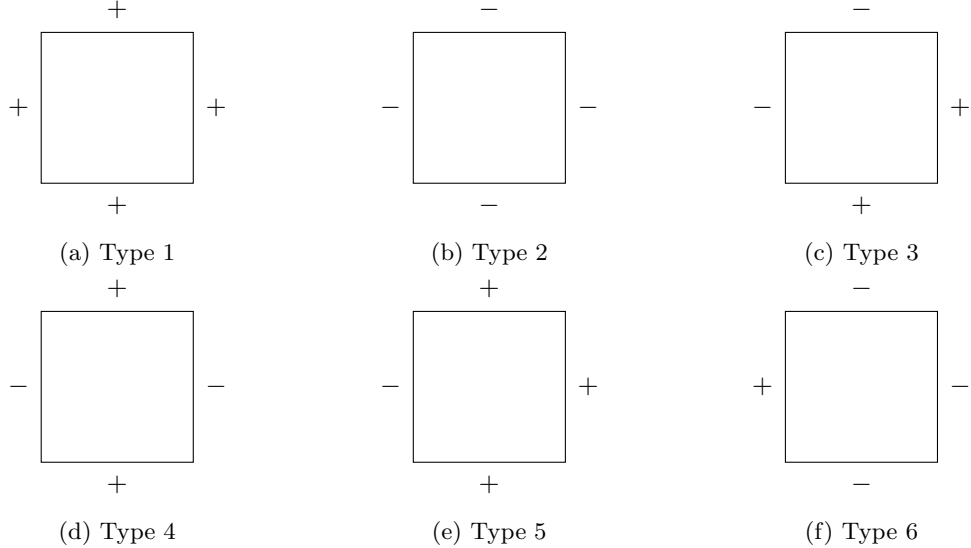

\begin{lemma}\label{Lem:switcher}
    Let $0<\delta<0.1$ be a constant. Then there exists a universal constant $\eta>0$ such that the following holds for every sufficiently large $n$.
    Let $\sigma:E(K_{2n}) \to \{-1, +1\}$ be a coloring of edges such that $\disc (K_{2m}) \leq \delta$.
    Then $K_{2n}$ contains at least $\eta n^4$ copies of switchers. 
\end{lemma}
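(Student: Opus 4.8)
The plan is to count, for each vertex $v$, how "mixed" its neighborhood signing is, and then show that if $K_{2n}$ has few switchers, then almost every vertex has a very unbalanced neighborhood, which forces $\disc(K_{2n})$ to be large — contradicting $\disc(K_{2n})\le\delta$. To make this precise, fix a vertex $v$ and consider the bipartite-like structure on $V(K_{2n})\setminus\{v\}$: write $A_v=N^+(v)$ and $B_v=N^-(v)$, with $|A_v|+|B_v|=2n-1$. A natural source of switchers is a $C_4$ of the form $v\,a\,w\,b\,v$ with $a\in A_v$, $b\in B_v$, i.e. the two edges at $v$ have opposite signs. Such a $C_4$ is of Type~4, 5, or 6 (it is a switcher) unless the signs of $wa$ and $wb$ are arranged to make it Type~3; one checks from Figure~\ref{fig:all_C4} that among the four sign patterns of the pair $(\sigma(wa),\sigma(wb))$, exactly those with $\sigma(wa)=\sigma(wb)$ give a non-switcher and those with $\sigma(wa)\ne\sigma(wb)$ give a switcher. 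So the number of switchers is at least (up to a constant for overcounting) the number of quadruples $(v,a,b,w)$ with $a\in A_v$, $b\in B_v$, and $\sigma(wa)\ne\sigma(wb)$.

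First I would set up the contrapositive: suppose $K_{2n}$ has at most $\eta n^4$ switchers, for $\eta$ to be chosen small. Summing the above count over all $v$ and $w$, for each ordered pair $(v,w)$ of distinct vertices the number of pairs $(a,b)$ with $a\in A_v$, $b\in B_v$, $\sigma(wa)\ne\sigma(wb)$ equals $|A_v\cap N^-(w)|\cdot|B_v\cap N^+(w)| + |A_v\cap N^+(w)|\cdot|B_v\cap N^-(w)|$ (here $a,b,w,v$ should be distinct, which costs only lower-order terms). Thus $\sum_{v}\sum_{w}\bigl(|A_v\cap N^-(w)||B_v\cap N^+(w)|+|A_v\cap N^+(w)||B_v\cap N^-(w)|\bigr)=O(\eta n^4)$. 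The plan is then to argue that this forces, for most $v$, that $\min(|A_v|,|B_v|)$ is small — roughly $o(n)$ or at least $\le\delta' n$ — because if both $A_v$ and $B_v$ are linear in size, then for a typical $w$ the four intersection sizes above are all linear (since $\disc$ small means most vertices have balanced-ish degrees; more carefully, even without that, a second-moment/convexity argument on $w$ gives a linear-in-$n$ lower bound on the inner sum for a positive fraction of $w$). Concretely, I expect to show: if $|A_v|,|B_v|\ge\gamma n$ then $\sum_w(\cdots)\ge c(\gamma)n^2$, so the number of such "balanced" vertices $v$ is at most $O(\eta/c(\gamma))\cdot n$, hence $o(n)$ once $\eta$ is small relative to $\gamma$.

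So all but $o(n)$ vertices have $\min(|A_v|,|B_v|)<\gamma n$; for each such vertex, $|d^+(v)-d^-(v)|>(1-2\gamma)\cdot 2n \ge (1-3\gamma)(2n)$, and the sign of $d^+(v)-d^-(v)$ is fixed per vertex but could vary. This is exactly the place where I would invoke $\disc(K_{2n})\le\delta$ to derive a contradiction. We have $2S_\sigma(K_{2n})=\sum_v (d^+(v)-d^-(v))$, so $\bigl|\sum_v(d^+(v)-d^-(v))\bigr|\le 2\delta\binom{2n}{2}\le 2\delta n(2n)$. But now partition the "unbalanced" vertices into $V^+$ (those with $d^+(v)>d^-(v)$) and $V^-$; each contributes $\pm(1-3\gamma)(2n)$ plus the $o(n)$ balanced vertices contributing at most $2n$ each, so $\bigl|\sum_v(d^+(v)-d^-(v))\bigr|\ge (1-3\gamma)(2n)\,\bigl||V^+|-|V^-|\bigr| - o(n)\cdot 2n$. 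Comparing gives $\bigl||V^+|-|V^-|\bigr|\le \frac{2\delta n(2n)+o(n^2)}{(1-3\gamma)(2n)} = (1+o(1))\frac{2\delta n}{1-3\gamma}$, so $V^+$ and $V^-$ are nearly equal in size; without loss of generality $|V^+|\ge(1-o(1))n$ after discarding a set of size $O(\delta n)$. But then restrict attention to the bipartite count one more time, or more simply: pick two vertices $u,w\in V^+$; since both have $d^-$ small ($<\gamma n$), we have $|N^+(u)\cap N^+(w)|\ge 2n-1-2\gamma n$, so $K_{2n}[V^+]$ is nearly complete with almost all edges positive — but then $S_\sigma(K_{2n})\ge (1-O(\gamma))\binom{|V^+|}{2} - (\text{other edges}) \ge (1-O(\gamma+\delta))\binom{2n}{2} - O((\delta+\gamma)n)\cdot 2n$, which contradicts $\disc(K_{2n})\le\delta<0.1$ once $\gamma,\eta$ are chosen small enough in terms of $\delta$. (If instead $V^-$ were the large class, the same argument gives $S_\sigma(K_{2n})$ very negative.)

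The main obstacle I anticipate is the middle step — turning "few switchers" into "few vertices have balanced neighborhoods" — because the naive bound $|A_v\cap N^-(w)|\cdot|B_v\cap N^+(w)|$ can be small for individual $w$ even when $|A_v|,|B_v|$ are large, if the signing of edges at $w$ conspires with $A_v$. Handling this cleanly will require averaging over $w$ (or a convexity/Cauchy–Schwarz argument), and one has to be a little careful that the averaging doesn't itself secretly use $\disc$ small in a circular way; I would keep the two uses of $\disc(K_{2n})\le\delta$ cleanly separated (only in the final contradiction), or alternatively run a direct double-counting: $\sum_{v,w} |A_v\cap N^-(w)|\,|B_v\cap N^+(w)|$ can be lower-bounded via Cauchy–Schwarz in terms of $\sum_{v,w}\bigl(|A_v\cap N^-(w)|+|B_v\cap N^+(w)|\bigr)$ and the number of relevant $(v,w)$ pairs, which themselves are controlled by the degree sequence alone. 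The bookkeeping of distinctness of $a,b,v,w$ and the overcounting factor in "number of switchers vs.\ number of quadruples" is routine and only affects constants.
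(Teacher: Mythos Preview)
Your switcher characterization is wrong, and this breaks the double count you set up. For the $4$-cycle $v\,a\,w\,b$ with $a\in N^+(v)$ and $b\in N^-(v)$, the two matchings $\{va,wb\}$ and $\{aw,bv\}$ have sums $1+\sigma(wb)$ and $\sigma(aw)-1$; these coincide exactly when $(\sigma(aw),\sigma(wb))=(+1,-1)$. Thus three of the four sign patterns give a switcher, and the unique non-switcher pattern is one of your ``$\sigma(wa)\ne\sigma(wb)$'' cases, not one of the ``$\sigma(wa)=\sigma(wb)$'' cases. Consequently your sum $\sum_w\bigl(|A_v\cap N^-(w)|\,|B_v\cap N^+(w)|+|A_v\cap N^+(w)|\,|B_v\cap N^-(w)|\bigr)$ actually contains the non-switcher term and cannot be bounded above by the number of switchers; the correct switcher count through $v$ is $\sum_w\bigl(|A_v||B_v|-|A_v\cap N^+(w)|\,|B_v\cap N^-(w)|\bigr)$, which lacks the symmetry you were planning to exploit.

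More seriously, your endgame does not produce a contradiction as written. Once you reach $|V^+|\approx|V^-|\approx n$ with $d^-(u)<\gamma n$ on $V^+$ and $d^+(u)<\gamma n$ on $V^-$, the positive contribution of $K_{2n}[V^+]$ to $S_\sigma(K_{2n})$ is only about $\binom{n}{2}$ and is cancelled by the equally large negative contribution of $K_{2n}[V^-]$; your asserted bound $S_\sigma(K_{2n})\ge(1-O(\gamma+\delta))\binom{2n}{2}-O((\delta+\gamma)n)\cdot 2n$ is simply false (you have silently replaced $\binom{|V^+|}{2}$ by $\binom{2n}{2}$). There \emph{is} a contradiction at this stage, but it comes from the cross edges: summing over $u\in V^+$ forces at least $(1-\gamma)n^2$ positive edges between $V^+$ and $V^-$, while summing $d^+$ over $V^-$ allows at most $\gamma n^2$. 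The paper sidesteps all of this with a direct dependent-random-choice argument: it uses $\disc(K_{2n})\le\delta$ only to ensure $\EE_v[d^+(v)d^-(v)]=\Omega(n^2)$, then finds a single vertex $v$ for which $\Omega(n^2)$ pairs $(x,y)\in N^+(v)\times N^-(v)$ satisfy $|N^+(x)\cap N^-(y)|=\Omega(n)$, and assembles $\Omega(n^4)$ switchers $xywz$ from these good pairs.
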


To prove \Cref{Lem:switcher}, we apply the dependent random choice technique.

\begin{proof}[Proof of \Cref{Lem:switcher}]
    Let $N_1$ be the number of vertices $v$ such that $d^+(v) \geq 1.89n$ and let $N_2$ be the number of vertices $v$ such that $d^-(v) \geq 1.89n$.
    We claim that $N_1$ and $N_2$ cannot be close to $n$.
    \begin{claim}\label{clm:N_1N_2}
        $N_1 \leq 0.9n$ and $N_2 \leq 0.9n$.
    \end{claim}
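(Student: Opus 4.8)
The plan is to establish $N_1 \le 0.9n$ by contradiction and then to obtain $N_2 \le 0.9n$ by symmetry. Assume $N_1 \ge 0.9n$ and fix a set $A$ of (about) $0.9n$ vertices, each with $d^+(v) \ge 1.89n$; since $d^+(v) + d^-(v) = 2n-1$, every vertex of $A$ satisfies $d^-(v) \le (2n-1) - 1.89n < 0.11n$.

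The first thing to notice is that the obvious move — lower-bounding $\sum_v d^+(v) \ge N_1 \cdot 1.89n$ to produce many positive edges — is not strong enough to contradict $\disc(K_{2n}) \le \delta < 0.1$. Instead I would \emph{upper}-bound the number of negative edges. Set $B := V(K_{2n}) \setminus A$, so $|B| \le 1.1n$. Every negative edge is either incident to $A$ or entirely contained in $B$; the number of the former is at most $\sum_{v \in A} d^-(v) < 0.9n \cdot 0.11n < 0.1 n^2$, and the number of the latter is at most $\binom{|B|}{2} \le \binom{1.1n}{2} < 0.61 n^2$. Hence $K_{2n}$ has fewer than $0.71 n^2$ negative edges, so more than $\binom{2n}{2} - 0.71 n^2 > 1.28 n^2$ positive edges for all large $n$. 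Therefore $\disc(K_{2n}) > \frac{1.28 n^2 - 0.71 n^2}{\binom{2n}{2}} > 0.2$, contradicting the hypothesis $\disc(K_{2n}) \le \delta < 0.1$.

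I do not expect a genuine obstacle here: every inequality above has generous slack, and the only point requiring mild care is that the correction $\binom{2n}{2} = 2n^2 - n$ and the rounding implicit in ``$0.9n$ vertices'' are harmless once $n$ is large. For $N_2$, I would run the identical argument with the signing $-\sigma$ in place of $\sigma$: this leaves $\disc(K_{2n})$ unchanged and interchanges the roles of $d^+$ and $d^-$, hence of $N_1$ and $N_2$.
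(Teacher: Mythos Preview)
Your argument is correct and is essentially the same double-counting as in the paper: you bound the number of negative edges from above via $\sum_{v\in A} d^-(v)+\binom{|B|}{2}$, whereas the paper bounds the number of positive edges from below via $\sum_{v\in A} d^+(v)-\binom{|A|}{2}$, and both yield the same contradiction with $\disc(K_{2n})\le\delta<0.1$. The only cosmetic slip is that for a contradiction you should assume $N_1>0.9n$ rather than $N_1\ge 0.9n$.
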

    \begin{claimproof}[Proof of \Cref{clm:N_1N_2}]
        Suppose to the contrary that $N_1 > 0.9n$. 
        We enumerate the vertices of $K_{2n}$ in $v_1, \ldots, v_{2n}$ so that $d^+(v_1) \geq \cdots \geq d^+(v_{2n})$. Let $G^+$ be the graph formed by all positive edges in colored $K_{2n}$. Note that 
        
        \[
        \sum_{i=1}^{0.9n} d^+(v_i) \leq e(G^+[v_1, \ldots, v_{0.9n}]) + e(G^+) \leq \binom{0.9n}{2} + e(G^+).
        \]
        On the other hand, as we assume that $N_1 > 0.9n$, we have
        \[
        \sum_{i=1}^{0.9n} d^+(v_i) \geq 1.89n \times 0.9n = 1.701n^2.
        \]
        Combine the above two inequalities, we have
        \[
        1.296n^2 \leq 1.701n^2 - \binom{0.9n}{2} \leq e(G^+) \leq  \left(\frac{1}{2}+\frac{\delta}{2}\right) \binom{2n}{2},
        \]
        which leads to a contradiction.
        By the same argument, $N_2 \leq 0.9n$.
    \end{claimproof}
    We take a vertex $v \in V(K_{2n})$ uniformly at random. From now, let $c= \frac{1}{2000}$.
    \begin{claim}\label{clm:dpm}
        We have $\EE[d^+(v)d^{-}(v)] \geq cn^2$.
    \end{claim}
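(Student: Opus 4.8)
The plan is to use only the elementary identity $d^+(v)+d^-(v)=2n-1$, valid for every vertex $v$ of $K_{2n}$, together with \Cref{clm:N_1N_2}. Since the two summands add up to a fixed number, the product $d^+(v)d^-(v)$ is bounded below by a quantity of order $n^2$ as soon as neither $d^+(v)$ nor $d^-(v)$ is within, say, $0.11n$ of the extreme values $0$ and $2n-1$. So the argument reduces to showing that a linear fraction of the vertices are ``balanced'' in this sense, and this is exactly what \Cref{clm:N_1N_2} delivers.

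Concretely, I would first count the balanced vertices. By \Cref{clm:N_1N_2} there are at most $0.9n$ vertices with $d^+(v)\geq 1.89n$ and at most $0.9n$ vertices with $d^-(v)\geq 1.89n$, hence at least $2n-0.9n-0.9n=0.2n$ vertices $v$ satisfy both $d^+(v)<1.89n$ and $d^-(v)<1.89n$. For any such $v$, the identity gives $d^+(v)=(2n-1)-d^-(v)>(2n-1)-1.89n=0.11n-1$, and symmetrically $d^-(v)>0.11n-1$; for $n$ sufficiently large this yields $\min\{d^+(v),d^-(v)\}\geq 0.1n$, and therefore $d^+(v)d^-(v)\geq 0.01n^2$.

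Finally I would average over the $2n$ vertices, discarding the (nonnegative) contribution of all remaining vertices:
\[
\EE[d^+(v)d^-(v)]=\frac{1}{2n}\sum_{v\in V(K_{2n})} d^+(v)d^-(v)\geq \frac{1}{2n}\cdot 0.2n\cdot 0.01n^2=\frac{n^2}{1000}\geq cn^2,
\]
since $c=\tfrac{1}{2000}$. There is no genuine obstacle here; the only points needing a little care are that $\min\{d^+(v),d^-(v)\}\geq 0.1n$ requires $n$ large enough that $0.11n-1\geq 0.1n$ (absorbed into ``sufficiently large $n$''), and that the estimate not be too wasteful — but the crude bound $d^+(v)d^-(v)\geq 0.01n^2$ already beats $cn^2$ by a factor of two, so no sharpening is required. (If a better constant were ever wanted, one could instead observe that $d^+(v)d^-(v)=d^+(v)\bigl((2n-1)-d^+(v)\bigr)$ is concave in $d^+(v)$, so on the range $d^+(v)\in(0.11n,1.89n)$ its minimum is attained at an endpoint and is of order $0.2n^2$.)
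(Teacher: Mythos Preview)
Your proof is correct and follows essentially the same approach as the paper: both use \Cref{clm:N_1N_2} to find at least $0.2n$ ``balanced'' vertices, bound $d^+(v)d^-(v)$ from below for each such vertex, and average. The only cosmetic difference is that the paper normalises via $t(v)=d^+(v)/(2n-1)$ and works with $t(v)(1-t(v))$, while you work directly with the degrees; your slightly cleaner bookkeeping even yields $n^2/1000$ rather than $n^2/2000$, comfortably beating $c=1/2000$.
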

    \begin{claimproof}[Proof of \Cref{clm:dpm}]
        Note that,
        \begin{align*}
            \EE[d^+(v)d^{-}(v)] = \frac{1}{2n} \sum_{v \in V(K_{2n})} d^+(v)d^-(v) &= \frac{1}{2n} \sum_{v \in V(K_{2n})} d^+(v)(2n-1-d^+(v)) \\
            &\geq (2n-2) \sum_{v \in V(K_{2n})} t(v)(1-t(v)) ,
        \end{align*}
        where $t(v) = d^+(v)/(2n-1)$. By Claim~\ref{clm:N_1N_2}, at least $0.2n$ vertices satisfy $0.05 \leq t(v) \leq 0.95$, provided $n$ is sufficiently large.
        Thus, 
        \[
        \sum_{v \in V(K_{2n})} t(v)(1-t(v)) \geq 0.2n \cdot 0.05^2 = \frac{n}{2000}. 
        \]
        Therefore, 
        \[
        \EE[d^+(v)d^{-}(v)] = (2n-2) \sum_{v \in V(K_{2n})} t(v)(1-t(v)) \geq (2n-2) \cdot \frac{n}{2000} \geq \frac{n^2}{2000} = cn^2. 
        \]
    \end{claimproof}
    We define an ordered pair of vertices $(x,y) \in V(K_{2n})^2$ as \emph{good} if $|N^+(x) \cap N^-(y)| \geq \frac{n}{10^4}$, and as \emph{bad} otherwise. Let $Z$ be the number of bad pairs in $N^+(v) \times N^-(v)$.
    By linearity of expectation, 
    \[
    \EE[Z] = \sum_{\text{$(x, y)$ is a bad pair}} \PP\left((x, y) \in N^+(v) \times N^-(v) \right) \leq \sum_{\text{$(x, y)$ is a bad pair}} \frac{n}{10^4} \times \frac{1}{2n} \leq  \frac{n^2}{5000}. 
    \]
    Thus, we have
    \[
    \EE[d^+(v)d^{-}(v) - Z] \geq \frac{cn^2}{2}. 
    \]
    This implies that there exists a vertex $v \in V(K_{2n})$ for which there are at least $\frac{cn^2}{2}$ good pairs in $N^+(v) \times N^-(v)$. 

    Consider an auxiliary bipartite graph $G$ on vertex set $N^+(v) \cup N^-(v)$ as follows. Given vertices $x \in N^+(v), y\in N^-(v)$, $xy \in E(G)$ if and only if $(x,y)$ is a good pair. 
    Note that $d(G) \geq \frac{cn^2/2}{2n} = cn/4$, then there is a subgraph $H \subseteq G$ on vertex set $X \cup Y$ such that $\delta(H) \geq cn/8$, where $X \subseteq N^+(v), Y \subseteq N^-(v)$. The minimum degree condition gives $|X|,|Y| \geq cn/8$. 
    By averaging, we may assume there are at least $|X||Y|/2$ positive edges between $X$ and $Y$ in signed $K_{2n}$. 
    For each positive edge $xy$ with $x \in X$ and $y \in Y$, there are $cn/10$ choices of $z \in X$ such that $(z,y)$ is good. For each good pair $(z,y)$, there are $\frac{n}{10^4}$ choices of $w \in N^+(z) \cap N^-(y)$. Note that $xywz$ is a switcher for each choice of the above vertices. Hence, the number of switchers in $K_{2n}$ is at least
    \[
    \frac{1}{2} \cdot \frac{|X||Y|}{2} \cdot \frac{cn}{10} \cdot \frac{n}{10^4} \geq \frac{c^3}{2^8\cdot 10^5} n^4 = \frac{n^4}{2^{13}\cdot 10^{14}},
    \]
    where the leading multiplicity constant $\frac{1}{2}$ is due to each switcher being counted at most twice in the above enumeration. Hence, $\eta = \frac{1}{2^{13} \cdot 10^{14}} > 0$ is a desired universal constant. 
\end{proof}

We are now ready to prove \Cref{thm:high_discrepancy}.
\begin{proof}[Proof of Theorem~\ref{thm:high_discrepancy}]
    Let $\eta$ be a constant obtained in \Cref{Lem:switcher}. We note that one can choose $\eta = \frac{1}{2^{13} \cdot 10^{14}}$. Let $\gamma = \eta/(2 \times 10^4)$. 
    If $\mathrm{disc}(K_{2n}) > \gamma$, then by Theorem~\ref{thm:balanced_coloring}, we have a $1$-factor decomposition of $K_{2n}$ such that each matching has discrepancy at least $\gamma/2$. 
    We now assume that $\mathrm{disc}(K_{2n}) \leq \gamma$. 

    Let $F_1, \ldots, F_{n-1}$ be a decomposition of $K_{2n}$ obtained by Lemma~\ref{Lem:C4-decomp}. Note that independent from the parity of $n$, $F_i$ has at least $\frac{n}{2}-2$ copies of $C_4$ for $1 \leq i \leq n-2$ and $F_{n-1}$ contains at least $\frac{n}{2}-2$ copies of $K_4$.
    Let $\pi \in S_{V(K_{2n})}$ be a permutation chosen uniformly at random. 
    For each $i \in [n-2]$, let 
    \[
    \mathcal{F}_i = \left\{(v_1, v_2, v_3, v_4) \in V(K_{2n})^{(4)} \mid \text{$v_1 v_2 v_3 v_4$ is a cycle of length 4 in $F_i$} \right\} \subseteq V(K_{2n})^{(4)}.
    \]
    Define $\sigma': V(K_{2n})^{(4)} \to \{-1,+1\}$ by letting $\sigma'((v_1,v_2,v_3,v_4)) = +1$ if $v_1 v_2 v_3 v_4$ is a switcher in $K_{2n}$.
    By Lemma~\ref{Lem:switcher}, $K_{2n}$ has at least $\eta n^4$ switchers and so $|\sigma'(+1)| \geq 8\eta n^4$. 
    Thus, by applying Lemma~\ref{lem:balanced_coloring} and the union bound, we have 
    \[
    \left|\sigma'^{-1}(+1) \cap \pi(\mathcal{F}_i)\right| \geq \left( \frac{8\eta n^4}{(2n)^{(4)}} - \gamma \right)|\mathcal{F}_i| \geq \frac{\eta}{20}|\mathcal{F}_i|
    \]
    for each $i \in [n-2]$ with probability $1-o(1)$.
    Therefore, $\pi(F_i)$ contains at least $\frac{\eta n}{50}$ switchers for every $i \in [n-2]$.
    Similarly, we fix a $1$-factor decomposition $\psi_1, \psi_2, \psi_3$ of $F_{n-1}$. Then for each pair of indices $i \neq j \in [3]$, the union $\psi_i \cup \psi_j$ contains at least $\frac{n}{2}-2$ copies of $C_4$.
    Therefore, by applying the same argument, $\pi(\psi_i \cup \psi_j)$ contains at least $\frac{\eta n}{50}$ switchers for every $i \neq j \in [3]$ with probability $1-o(1)$.
    Furthermore, by \Cref{thm:balanced_coloring_stronger}, we have $\mathrm{disc}(\pi(F_i)) \leq 2\gamma$ for all $i \in [n-1]$ with probability $1-o(1)$.
            
    Therefore, by taking a union bound, there exists a permutation $\pi \in S_{V(K_{2n})}$ such that the following holds:
    \begin{itemize}
        \item $\mathrm{disc}(\pi(F_i)) \leq 2\gamma$ for every $i \in [n-1]$, 
        \item $\pi(F_i)$ contains at least $\eta n/50$ switchers for every $i \in [n-2]$, and 
        \item $F_{n-1}$ has a $1$-factor decomposition $\{\psi_1, \psi_2, \psi_3\}$ such that $\pi(\psi_i \cup \psi_j)$ contains at least $\eta n/50$ switchers for every $i \neq j \in [3]$.
    \end{itemize}
    By relabeling the vertices of $K_{2n}$, we may assume that $\pi$ is the identity permutation. We now claim that each $F_i$ can be decomposed into $1$-factors with high discrepancy.
    \begin{claim}\label{clm:1factor}
        For every $i \in [n-1]$, there is a $1$-factor decomposition of $F_i$ such that each 1-factor has a discrepancy at least $5\gamma$. 
    \end{claim}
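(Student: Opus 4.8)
The plan to prove \Cref{clm:1factor} is to treat the $C_4$-factors $F_1,\dots,F_{n-2}$ and the final factor $F_{n-1}$ (a $K_4$-factor, plus a single $K_{3,3}$ when $n$ is odd) separately, in each case exploiting the abundance of switchers to keep the signed discrepancy of every $1$-factor large. We will only use that, after the relabelling, $\disc(F_i)\le 2\gamma$ for all $i$, that each $F_i$ with $i\le n-2$ contains at least $\eta n/50$ switcher copies of $C_4$, and that $\psi_1\cup\psi_2$ contains at least $\eta n/50$ such switchers; recall $\gamma=\eta/(2\cdot 10^4)$, so $\eta n/25=800\gamma n$.

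\smallskip\noindent\emph{The case $i\in[n-2]$.} Since $F_i$ is $2$-regular it has exactly two $1$-factors, say $\psi_1,\psi_2$, and the only freedom is to decide, inside each cycle component $Q$ of $F_i$, which of the two perfect matchings of $Q$ is placed in $\psi_1$. Put $D:=S_\sigma(\psi_1)-S_\sigma(\psi_2)$; a component $Q$ contributes $\pm\delta_Q$ to $D$, where $\delta_Q$ is the difference of the two matching-sums of $Q$, and this is $0$ for every non-switcher $C_4$, lies in $\{-4,-2,2,4\}$ for every switcher $C_4$, and is an even number of absolute value at most $6$ for the single $C_6$ (if present). Fixing the $C_6$'s contribution arbitrarily, $D$ becomes a bounded constant plus a signed sum of at least $\eta n/50$ terms of absolute value $2$ or $4$, so as the signs vary $D$ runs through a set of common difference at most $8$ reaching from a value $>15\gamma n$ down to a value $<14\gamma n$; hence the matchings can be chosen with $14\gamma n\le D\le 15\gamma n$. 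Since $S_\sigma(\psi_1)+S_\sigma(\psi_2)=S_\sigma(F_i)$ with $|S_\sigma(F_i)|\le\disc(F_i)\cdot 2n\le 4\gamma n$ and each $\psi_j$ has $n$ edges, this forces $|S_\sigma(\psi_j)|=\tfrac12|S_\sigma(F_i)\pm D|\ge\tfrac12(|D|-|S_\sigma(F_i)|)\ge 5\gamma n$, i.e.\ $\disc(\psi_j)\ge 5\gamma$.

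\smallskip\noindent\emph{The case $i=n-1$.} Now every component of $F_{n-1}$ is a $K_4$ — the lone $K_{3,3}$, if any, perturbs each quantity below by $O(1)$, which the slack will absorb — and each $1$-factorization of $K_4$ consists of three perfect matchings, so decomposing $F_{n-1}$ into three $1$-factors $\psi_1',\psi_2',\psi_3'$ amounts to distributing the three matchings of each component $R_\ell$ among three slots. Let $\mathrm{max}_\ell\ge\mathrm{med}_\ell\ge\mathrm{min}_\ell$ be the three matching-sums of $R_\ell$, each in $\{-2,0,2\}$. Each of the $\ge\eta n/50$ switchers of $\psi_1\cup\psi_2$ lies in a distinct $K_4$-component and forces $\mathrm{max}_\ell>\mathrm{min}_\ell$ there, so $\sum_\ell(\mathrm{max}_\ell-\mathrm{min}_\ell)\ge 800\gamma n$, whereas $|S_\sigma(F_{n-1})|\le\disc(F_{n-1})\cdot 3n\le 6\gamma n$. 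Consequently one of
$$P:=\sum_\ell(\mathrm{max}_\ell+\mathrm{med}_\ell)\ge 13\gamma n\qquad\text{or}\qquad Q:=\sum_\ell(\mathrm{med}_\ell+\mathrm{min}_\ell)\le -13\gamma n$$
must hold, since $P<13\gamma n$ and $Q>-13\gamma n$ would give $P-Q<26\gamma n$, contradicting $P-Q=\sum_\ell(\mathrm{max}_\ell-\mathrm{min}_\ell)$. In the first case, route the minimum-sum matching of every component into $\psi_3'$: then $S_\sigma(\psi_3')=\sum_\ell\mathrm{min}_\ell+O(1)=S_\sigma(F_{n-1})-P+O(1)\le -6\gamma n$, while the value $S_\sigma(\psi_1')+S_\sigma(\psi_2')=P+O(1)\ge 12\gamma n$ is fixed, and the remaining freedom — swapping the larger two matchings of a component between the first two slots — moves $S_\sigma(\psi_1')-S_\sigma(\psi_2')$ in steps of size at most $8$, so it can be made $O(1)$; then $|S_\sigma(\psi_j')|\ge 6\gamma n-O(1)\ge 5\gamma n$ for $j=1,2$, and with $|S_\sigma(\psi_3')|\ge 6\gamma n$ this yields $\disc(\psi_j')\ge 5\gamma$ for all three. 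The case $Q\le -13\gamma n$ is symmetric, routing each maximum-sum matching into $\psi_3'$.

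\smallskip I expect the only real difficulty to be the bookkeeping in the $i=n-1$ step: one must verify that committing the extreme matchings to the third slot simultaneously makes that slot's discrepancy large — this is exactly what the gap estimate $\sum_\ell(\mathrm{max}_\ell-\mathrm{min}_\ell)\gg|S_\sigma(F_{n-1})|$ buys through the dichotomy on $P$ and $Q$ — and still leaves enough spread among the other two matchings to balance the first two slots. The $K_{3,3}$ component and all parity/rounding issues contribute only $O(1)$ error terms, which $\gamma$ is taken small enough (relative to $\eta$) to swallow; the $i\le n-2$ step is then a one-line variant of the same balancing idea.
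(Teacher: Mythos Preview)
Your plan is correct, and for $i\in[n-2]$ it is essentially the paper's argument rephrased: both exploit the $\ge\eta n/50$ switcher $C_4$'s to shift $S_\sigma(\psi_1)-S_\sigma(\psi_2)$ by $\Theta(n)$, the paper by swapping all switchers oriented one way, you by tuning this difference into a target window of width $\gamma n\ge 8$.

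For $i=n-1$ your route is genuinely different from the paper's and, in fact, uses a weaker hypothesis. The paper needs that \emph{all three} unions $\psi_i\cup\psi_j$ contain $\ge\eta n/50$ switchers, and then runs a three-stage greedy fix-up: first adjust $\psi_1$ against $\psi_2$ using the $Q^{12}$ switchers, then $\psi_2$ against $\psi_3$ using the $Q^{23}$ switchers (which are chosen vertex-disjoint from the $Q^{12}$'s so they survive the first swap), and finally $\psi_3$ against $\psi_1$ with a carefully sized subset of $Q^{13}$ switchers so as not to destroy what was gained in $\psi_1$. Your argument instead globally sorts the three matching-sums in each $K_4$ as $\mathrm{max}_\ell\ge\mathrm{med}_\ell\ge\mathrm{min}_\ell$, uses only the $\psi_1\cup\psi_2$ switchers to force $\sum(\mathrm{max}_\ell-\mathrm{min}_\ell)\ge 800\gamma n$, and then the $P$/$Q$ dichotomy lets you dump all extremal matchings into one slot and balance the other two. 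This is cleaner, avoids the vertex-disjointness bookkeeping, and shows that one pair of many switchers already suffices; the paper's approach, on the other hand, is more ``local'' and would generalise more readily to factors with more than three $1$-factors.
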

    
    \begin{claimproof}[Proof of \Cref{clm:1factor}]
        If $i \in [n-2]$, then $F_i$ contains $\frac{\eta n}{50}$ switchers.
        Let $\{M_1, M_2\}$ be an arbitrary $1$-factor decomposition of $F_i$. 
        We may assume that there is a $k \in [2]$ such that $\disc(M_k) < 5 \gamma$, otherwise $M_1$ and $M_2$ is the desired 1-factor decomposition. Then $\disc(M_{2-k}) < 10\gamma$, since $\disc(F_i) \leq 2\gamma$. 

        Let $Q_1, \ldots, Q_m$ be switchers contained in $F_i$ where $m=\eta n/50$.
        Let $\psi^k_j := Q_j \cap M_k$ for $k \in [2]$ and let $J_1 \subseteq [m]$ be the set of indices $j \in [m]$ such that $S_{\sigma}(\psi^1_j) > S_{\sigma}(\psi^2_j)$ and $J_2 = [m] \setminus J_1$.
        Then either $|J_1| \geq \eta n/100$ or $|J_2| \geq \eta n/100$. 
        We only consider when $|J_1| \geq \eta n/100$ as the remaining case can be proved similarly.

        We replace $\psi^1_j$ in $M_1$ by $\psi^2_j$ for all $j \in J_1$ to get $M'_1$, and replace $\psi^2_j$ in $M_2$ by $\psi^1_j$ for all $j \in J_1$ to get $M'_2$.
        As $S_\sigma(\psi^1_j) -  S_\sigma(\psi^2_j)$ is either $2$ or $4$ for each $j \in J_1$ and $|J_1| \geq |S_{\sigma}(M_1)|$, we have 
        \[
        \disc(M'_1) = \left| \frac{S_\sigma(M'_1)}{n} \right| \geq \left| \frac{S_\sigma(M_1) - 2|J_1|}{n} \right| \geq \left| \disc(M_1)  - 2\frac{|J_1|}{n} \right| \geq \left| \frac{\eta}{50} - 5 \gamma\right| > 5\gamma,
        \]
        and
        \[
        \disc(M'_2) = \left| \frac{S_\sigma(M'_2)}{n} \right| \geq \left| \frac{S_\sigma(M_2) + 2|J_1|}{n} \right| \geq \left| \disc(M_2)  - 2\frac{|J_1|}{n} \right| \geq \left| \frac{\eta}{50} - 10\gamma \right|> 5\gamma.
        \]
        Therefore, $M'_1$ and $M'_2$ form a desired $1$-factor decomposition of $F_i$.

        We now consider the case when $i=n-1$.
        Let $\{\psi_1, \psi_2, \psi_3\}$ be a $1$-factor decomposition of $F_{n-1}$ chosen above.
        As each $\psi_i \cup \psi_j$ contains at least $\eta n/50$ switchers, we can select vertex disjoint switchers $Q_{1}^{12}, \ldots, Q_m^{12}, Q_{1}^{13}, \ldots, Q_m^{13}, Q_1^{23}, \ldots, Q_m^{23}$ with $m=\eta n/150$ such that for every $1 \leq t \leq m$ and $i < j \in [3]$, the switcher $Q_t^{ij}$ is contained in $\psi_i \cup \psi_j$.

        We now greedily make each $\psi_i$ have a large discrepancy.
        If $\disc(\psi_1) \geq 25 \gamma$, then we let $\psi_1' = \psi_1$ and $\psi_2' = \psi_2$.
        If $\disc(\psi_1) < 25 \gamma$, then let $J_1 \subseteq [m]$ be the set of indices $j \in [m]$ such that $S_{\sigma}(Q^{12}_j \cap \psi_1) > S_{\sigma}(Q^{12}_j \cap \psi_2)$.
        Let $J = J_1$ if  $|J_1| \geq m/2$ and $J=[m] \setminus J_1$ otherwise.
        We replace $Q^{12}_j \cap \psi_1$ in $\psi_1$ by $Q^{12}_j \cap \psi_2$ for all $j \in J$ to obtain $\psi_1'$ and replace $Q^{12}_j \cap \psi_2$ in $\psi_2$ by $Q^{12}_j \cap \psi_1$ for all $j \in J$ to obtain $\psi_2'$. 
        Then as $|J| \geq m/2 \geq |S_\sigma(\psi_1)|$, we have 
        $$\disc(\psi_1') \geq \left| \frac{S_\sigma(\psi_1) - 2|J_1|}{n} \right| \geq \left| \disc(M_1)  - 2\frac{|J_1|}{n} \right| \geq \left| \frac{\eta}{300} - 25 \gamma\right| > 25\gamma.$$
        
        If $\disc(\psi_2') \geq 5\gamma$, then let $\psi_2'' = \psi_2'$ and $\psi_3'=\psi_3$.
        If $\disc(\psi_2') < 5\gamma$, we carry on the same process. 
        Note that $Q^{12}_j$ and $Q^{23}_{j'}$ are vertex disjoint, so $Q^{23}_{j'}$ are contained in $\psi_2' \cup \psi_3$ for all $j' \in [m]$.
        Let $J_1' \subseteq [m]$ be the set of indices $j \in [m]$ such that $S_{\sigma}(Q^{23}_j \cap \psi_2') > S_{\sigma}(Q^{23}_j \cap \psi_3)$.
        Let $J' = J_1'$ if $|J_1'| \geq m/2$ and $J' = [m] \setminus J_1'$ otherwise.
        We construct $\psi_2''$ by replacing $Q^{23}_j \cap \psi_2'$ in $\psi_2'$ by $Q^{23}_j \cap \psi_3$ for all $j \in J'$ and $\psi_3'$ by replacing $Q^{23}_j \cap \psi_3$ in $\psi_3$ by $Q^{23}_j \cap \psi_2'$ for all $j \in J'$.
        Then by a similar computation, we have 
        $$\disc(\psi_2'') \geq |\frac{\eta}{300}-5\gamma| > 5\gamma.$$

        Finally, if $\disc(\psi_3') \geq 5\gamma$, then let $\psi_1''=\psi_1'$ and $\psi_3''=\psi_3'$.
        If $\disc(\psi_3') < 5\gamma$, then let $J_1''$ be the set of indices $j \in [m]$ such that $S_{\sigma}(Q^{13}_j \cap \psi_1') > S_{\sigma}(Q^{13}_j \cap \psi_3')$.
        If $|J_1''| \geq m/2$, then let $J'' \subseteq J_1''$ be a subset of arbitrary $5\gamma$ elements. If $|J_1''| < m/2$, then let $J'' \subseteq [m] \setminus J_1''$ be the set of arbitrary $5\gamma$ elements.
        We construct $\psi_1''$ by replacing $Q^{13}_j \cap \psi_1'$ in $\psi_1'$ by $Q^{13}_j \cap \psi_3'$ for all $j \in J''$ and $\psi_3''$ by replacing $Q^{13}_j \cap \psi_3'$ in $\psi_3'$ by $Q^{13}_j \cap \psi_1'$ for all $j \in J''$.
        As $2 \leq |S_\sigma(Q^{13}_j \cap \psi_1') - S_{\sigma}(Q^{13}_j \cap \psi_3')| \leq 4$ for each $j \in J'$, we have $$\disc(\psi_1'') \geq 25\gamma - 4\frac{|J|}{n} \geq 5\gamma,$$
        and
        $$\disc(\psi_3'') > 2\frac{|J|}{n} - 5\gamma \geq 5\gamma.$$
        Therefore, $\{\psi_1'', \psi_2'', \psi_3''\}$ is a desired decomposition.
    \end{claimproof}
    By the claim, each $F_i$ can be decomposed into $1$-factors with discrepancy at least $5\gamma$ and $\{F_1, \ldots, F_{n-1}\}$ is an edge decomposition of $K_{2n}$, so we obtain a desired $1$-factor decomposition of $K_{2n}$.
\end{proof}

\section{Concluding remark}
In this section, we would like to consider several generalizations of \Cref{thm:high_discrepancy}.
Indeed, \Cref{thm:high_discrepancy} can be generalized into the following ``unbalanced'' $1$-factor decomposition result. 
\begin{theorem}\label{thm:unbalanced}
    For every $p_0 \in (0, 1)$, there exist $c=c(p_0)>0$ and $n_0=n_0(p_0)$ such that the following holds for every $n \geq n_0$.
    Let $\sigma:E(K_{2n}) \to \{-1, +1\}$ be an edge coloring of $K_{2n}$ such that $\disc^{\pm}(K_{2n}) \in [-p_0, p_0]$.
    Then there exists a $1$-factor decomposition $\{\psi_1, \ldots, \psi_{2n-1}\}$ of $K_{2n}$ such that for all $i\in[m]$, we have $\left| \disc^{\pm}(\psi_i) - \disc^{\pm}(K_{2n}) \right| \geq c$.
\end{theorem}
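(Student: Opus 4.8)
The plan is to rerun the proof of \Cref{thm:high_discrepancy} with two changes: the switcher estimate of \Cref{Lem:switcher} must be re-established without assuming $\disc(K_{2n})$ is small, and the switchers must be used to push each $1$-factor \emph{away from} $\disc^{\pm}(K_{2n})$ rather than away from $0$. Write $p := \disc^{\pm}(K_{2n}) \in [-p_0, p_0]$, and let $G^{+}$, $G^{-}$ be the graphs of positive, resp.\ negative, edges; since $p_0 < 1$ we have $\min\{e(G^{+}), e(G^{-})\} \ge \frac{1-p_0}{2}\binom{2n}{2} = \Omega_{p_0}(n^2)$. The only new ingredient needed is an unbalanced analogue of \Cref{Lem:switcher}: if $|p| \le p_0 < 1$ then $K_{2n}$ contains at least $\eta(p_0)\, n^4$ switchers for some $\eta(p_0) > 0$. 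In the proof of \Cref{Lem:switcher} the hypothesis $\disc(K_{2n}) \le \delta$ is used \emph{only} through \Cref{clm:N_1N_2} and \Cref{clm:dpm}, whose combined output is that $\EE_{v}[d^{+}(v)d^{-}(v)] = \Omega(n^2)$ for a uniformly random vertex $v$; once this is available, the dependent-random-choice argument produces $\Omega(n^4)$ switchers, with constants depending only on that lower bound. So it suffices to show $\sum_{v} d^{+}(v)d^{-}(v) \ge c(p_0)\, n^3$ whenever $|p| \le p_0$.

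I would prove this by contradiction. If $\sum_{v} d^{+}(v)d^{-}(v) = o(n^3)$, then since $d^{+}(v) + d^{-}(v) = 2n-1$ we get $\sum_{v} \min\{d^{+}(v), d^{-}(v)\} = o(n^2)$; setting $A := \{v : d^{-}(v) \le d^{+}(v)\}$ and $B := V(K_{2n}) \setminus A$, every negative edge is incident to $A$ or lies inside $B$, so $e(G^{-}) \le \sum_{v \in A} d^{-}(v) + \binom{|B|}{2} \le o(n^2) + \binom{|B|}{2}$, and symmetrically $e(G^{+}) \le o(n^2) + \binom{|A|}{2}$. Summing and using $\binom{|A|}{2} + \binom{|B|}{2} = \binom{2n}{2} - |A||B|$ forces $|A||B| = o(n^2)$, hence $\min\{|A|, |B|\} = o(n)$; but then the side with $o(n)$ vertices supports only $o(n^2)$ of the monochromatic edges of its colour, contradicting $\min\{e(G^{+}), e(G^{-})\} = \Omega_{p_0}(n^2)$. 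Tracking constants gives an explicit $c(p_0) > 0$, hence $\eta(p_0) > 0$.

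With the switcher count in hand I would follow \Cref{thm:high_discrepancy}: apply \Cref{Lem:C4-decomp} to write $K_{2n} = F_1 \sqcup \dots \sqcup F_{n-1}$ with each $F_i$ ($i \le n-2$) containing $\ge \tfrac n2 - 2$ vertex-disjoint $C_4$'s and $F_{n-1}$ containing $\ge \tfrac n2 - 2$ vertex-disjoint $K_4$'s; fix a $1$-factorization $\{\psi_1, \psi_2, \psi_3\}$ of $F_{n-1}$, so each $\psi_s \cup \psi_t$ contains $\ge \tfrac n2 - 2$ disjoint $C_4$'s (or $C_6$'s). Take $\pi \in S_{V(K_{2n})}$ uniformly at random. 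Applying \Cref{lem:balanced_coloring} with $k = 4$ and $\Delta = O(1)$ to the families of $4$-tuples tracing these $C_4$'s and to the $\{-1,+1\}$-valued function indicating switchers, the unbalanced switcher count gives, via a union bound, that with probability $1 - o(1)$ every $\pi(F_i)$ with $i \le n-2$ and every $\pi(\psi_s \cup \psi_t)$ contains $\Omega_{p_0}(n)$ vertex-disjoint switchers; and by \Cref{thm:balanced_coloring_stronger}, with probability $1 - o(1)$ we also have $\disc^{\pm}(\pi(F_i)) = p \pm \varepsilon$ for all $i \le n-1$, where $\varepsilon = \varepsilon(p_0) > 0$ is a small constant fixed below. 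Fix such a $\pi$ and relabel so that $\pi$ is the identity.

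Finally, choose $c = c(p_0) > 0$ smaller than a fixed fraction of the switcher-density constant above and set $\varepsilon := c$; then I would mirror \Cref{clm:1factor}. For $i \le n-2$: start from any $1$-factorization $\{M_1, M_2\}$ of $F_i$ and note $\disc^{\pm}(M_1) + \disc^{\pm}(M_2) = 2\disc^{\pm}(F_i) = 2p \pm 2\varepsilon$, so it is enough to force $|\disc^{\pm}(M_1) - p| \ge c + 2\varepsilon$, which automatically leaves $M_2$ on the opposite side of $p$ at distance $\ge c$. Each disjoint switcher of $F_i$, when its two perfect matchings are swapped between $M_1$ and $M_2$, changes $S_{\sigma}(M_1)$ by $\pm 2$ or $\pm 4$ (and $S_{\sigma}(M_2)$ oppositely), and at least half of them move it in one common direction; performing just enough of those moves shifts $\disc^{\pm}(M_1)$ past the target while overshooting by only $O(1/n)$, which is possible because the number of available switchers is $\Omega_{p_0}(n)$ while only $O((c+\varepsilon)n)$ of them are needed. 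The case $i = n-1$ is handled by the same three-step greedy procedure as in \Cref{clm:1factor}, successively using the switchers in $\psi_1 \cup \psi_2$, then $\psi_2 \cup \psi_3$, then $\psi_1 \cup \psi_3$, to bring each of the three matchings to distance $\ge c$ from $p$. Combining these $1$-factorizations over all $i$ gives the desired decomposition of $K_{2n}$. The main obstacle is the unbalanced switcher estimate; once $\EE_v[d^{+}(v)d^{-}(v)] = \Omega_{p_0}(n^2)$ is established, the rest is a routine adaptation of the balanced argument.
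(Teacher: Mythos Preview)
Your proposal is correct and follows exactly the approach the paper intends: the paper omits the proof entirely, stating only that it ``is the same as the proof of \Cref{thm:high_discrepancy}'', and your plan of rerunning that proof with (i) a $p_0$-dependent switcher count and (ii) pushing each factor away from $p=\disc^{\pm}(K_{2n})$ rather than from $0$ is precisely the required adaptation. Your contradiction argument for $\sum_v d^{+}(v)d^{-}(v)=\Omega_{p_0}(n^3)$ is in fact cleaner than directly recalibrating the thresholds in \Cref{clm:N_1N_2}--\Cref{clm:dpm}, but serves the identical purpose, and from there the dependent random choice and the swap argument of \Cref{clm:1factor} carry over verbatim with $p_0$-dependent constants.
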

The proof is the same as the proof of \Cref{thm:high_discrepancy} so we omit it here. 
As a simple corollary of this theorem, we can prove a multi-colored version of \Cref{thm:high_discrepancy}.
\begin{theorem}
    For every $k \geq 2$, there exists $c=c(k)>0$ such that the following holds for sufficiently large $n$.
    Let $\sigma:E(K_{2n}) \to [k]$ be an edge coloring of $K_{2n}$. Then there exists a $1$-factor decomposition $\mathcal{M}$ of $K_{2n}$ such that for each perfect matching $M \in \mathcal{M}$, there exists a color $i \in [k]$ that appears more than $\frac{n}{k}+cn$ times.
\end{theorem}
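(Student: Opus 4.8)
The plan is to deduce this from \Cref{thm:unbalanced} together with \Cref{thm:balanced_coloring_stronger} by a short two-case analysis. A single naive reduction to two colours does not suffice: after choosing a colour and signing ``that colour $+1$, the rest $-1$'', \Cref{thm:unbalanced} only forces every matching of the resulting decomposition to deviate from that colour's \emph{global} density, which on its own is consistent with the matching having an essentially uniform colour distribution. So I would first split into two regimes. Fix $k$; set $p_0 := 1-\tfrac1{4k}$, let $c_0 = c_0(p_0)$ be the constant from \Cref{thm:unbalanced} (we may assume $0 < c_0 \le 1$), and put $\delta := c_0/4$ and $c := c_0/(8k)$. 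Given $\sigma\colon E(K_{2n})\to[k]$, let $p_i$ be the fraction of colour-$i$ edges, reindex so that $p_1 = \max_i p_i \ge 1/k$, and for a subgraph $H$ write $q_i(H)$ for the fraction of its edges of colour $i$.

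Suppose first that $p_1 \ge \tfrac1k + \delta$. Here I would produce a ``balanced'' decomposition. For each colour $i$ let $\sigma_i\colon E(K_{2n}) \to \{-1,+1\}$ take value $+1$ exactly on the colour-$i$ edges, so $\disc^{\pm}_{\sigma_i}(K_{2n}) = 2p_i - 1$; apply \Cref{thm:balanced_coloring_stronger} to $K_{2n}$ with $\Delta = 1$, accuracy $\varepsilon := \delta$, any fixed $1$-factorization $\{F_1,\dots,F_{2n-1}\}$ of $K_{2n}$, and the colouring $\sigma_i$, once for each $i \in [k]$. Since a uniformly random relabelling $\pi$ fails for a fixed colour with probability $o(1)$, a union bound over the $k$ colours yields a single $\pi$ giving a $1$-factor decomposition $\{\psi_1,\dots,\psi_{2n-1}\}$ with $q_i(\psi_j) = p_i \pm \delta/2$ for all $i \in [k]$ and $j \in [2n-1]$. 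In particular $q_1(\psi_j) \ge p_1 - \delta/2 \ge \tfrac1k + \tfrac{\delta}{2} > \tfrac1k + c$, so colour $1$ appears more than $(\tfrac1k + c)n$ times in every $\psi_j$, as required.

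Now suppose $p_1 < \tfrac1k + \delta$, so every $p_i < \tfrac1k + \delta$. Let $\sigma'$ be $+1$ on colour-$1$ edges and $-1$ on all others; then $\disc^{\pm}_{\sigma'}(K_{2n}) = 2p_1 - 1 \in [-p_0, p_0]$, since $2p_1 - 1 \ge \tfrac2k - 1 \ge -1 + \tfrac1{4k}$ and $2p_1 - 1 < 2(\tfrac1k+\delta) - 1 \le 1 - \tfrac1{4k}$ (the latter using $\delta \le 1/4$). By \Cref{thm:unbalanced} there is a $1$-factor decomposition $\{\psi_1,\dots,\psi_{2n-1}\}$ with $\bigl|\disc^{\pm}_{\sigma'}(\psi_j) - (2p_1-1)\bigr| \ge c_0$, i.e.\ $\bigl|q_1(\psi_j) - p_1\bigr| \ge c_0/2$ for all $j$. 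Fix $j$: if $q_1(\psi_j) \ge p_1 + c_0/2 \ge \tfrac1k + c_0/2$, then colour $1$ occupies more than $(\tfrac1k + c)n$ edges of $\psi_j$; otherwise $q_1(\psi_j) \le p_1 - c_0/2 < \tfrac1k - c_0/4$, so colours $2,\dots,k$ together cover more than $(\tfrac{k-1}{k} + \tfrac{c_0}{4})n$ edges of $\psi_j$, and by pigeonhole one of them covers more than $\tfrac{1}{k-1}(\tfrac{k-1}{k} + \tfrac{c_0}{4})n = (\tfrac1k + \tfrac{c_0}{4(k-1)})n > (\tfrac1k + c)n$ of them. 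In either case some colour appears more than $(\tfrac1k + c)n$ times in $\psi_j$, finishing the plan. The only non-routine point is the case split: the pigeonhole argument in the second regime closes precisely because there $p_1$ lies within $\delta$ of $1/k$, which is why the regime $p_1 \ge \tfrac1k + \delta$ must be peeled off and handled by a decomposition that faithfully reproduces the global colour proportions. All ``$n$ sufficiently large'' requirements are inherited from \Cref{thm:unbalanced} and \Cref{thm:balanced_coloring_stronger} with parameters depending only on $k$.
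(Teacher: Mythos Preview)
Your proposal is correct and follows essentially the same approach as the paper's (very terse) sketch: first peel off the case where the colouring is already noticeably unbalanced via the balanced-decomposition theorem, then in the nearly-balanced case pass to a $\{-1,+1\}$-signing, apply \Cref{thm:unbalanced}, and finish with pigeonhole. The only cosmetic difference is that the paper signs $[k-1]$ versus $\{k\}$ whereas you sign $\{1\}$ versus $[2,k]$; your version is simply a more explicit and carefully quantified rendering of the paper's argument.
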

\begin{proof}[Proof sketch]
    By \Cref{thm:balanced_coloring}, we may assume that $\sigma$ is a balanced coloring. i.e., each color appears almost equal in time.
    Then we define new coloring $\sigma':E(K_{2n}) \to \{-1, +1\}$ by recoloring colors in $[k-1]$ by $+1$ and color $k$ by $-1$.
    Then by applying \Cref{thm:unbalanced}, we obtain a desired $1$-factor decomposition. 
\end{proof}

Another possible generalization of \Cref{thm:high_discrepancy} is the consideration of the Dirac-type problem. 
Csaba, K\"{u}hn, Lo, Osthus and Treglown~\cite{Dirac-1-factorization} proved that if $d \geq 2\lceil n/4 \rceil -1$, then every $d$-regular graph has $1$-factor decomposition, confirming the conjecture by Chetwynd and Hilton~\cite{Chetwynd-Hilton}. 
On the other hand, by adapting the construction of Balogh, Csaba, Jing, and Pluh\'{a}r~\cite{Discrepancy-Hamcycle}, there exists $(3n/4)$-regular graph such that all the perfect matchings have discrepancy $0$. Then it would be interesting to consider the threshold of $1$-factor decomposition with high discrepancy.
\begin{question}
    What is the smallest $\delta>0$ such that the following holds:
    If $G$ is $d$-regular with $d \geq (\delta + o(1))n$, then for every edge coloring $\sigma:E(G) \to \{-1, +1\}$, there exists a $1$-factor decomposition of $G$ such that each perfect matching in the decomposition has discrepancy $\Omega(1)$. 
\end{question}

We finally want to consider Hamilton cycle decompositions. It is well-known~\cite{HILTON1984125} that $K_{2n+1}$ has a decomposition into Hamilton cycles.
The result by Balogh, Csaba, Jing and Pluh\'{a}r~\cite{Discrepancy-Hamcycle}, for every edge coloring $\sigma:E(K_{2n+1}) \to \{-1, +1\}$, one can find at least $(1/8-o(1))n$ edge-disjoint Hamilton cycles with discrepancy $\Omega(1)$.
We would like to ask whether it is possible to decompose the edge set of $K_{2n+1}$ into Hamilton cycles such that each has discrepancy $\Omega(1)$. 


\subsection*{Acknowledgement}
JA is supported by the National Natural Science Foundation of China under grant No.12161141006 and No.12401456.
SI and HL are supported by the National Research Foundation of Korea (NRF) grant funded by the Korea government(MSIT) No. RS-2023-00210430, and supported by the Institute for Basic Science (IBS-R029-C4).

This research was performed during the third and the fourth authors' visits to Nankai University. They thank Nankai University for their hospitality and for providing a great working environment.


\printbibliography

\end{document}